\newcommand{\bburl}[1]{\textcolor{blue}{\url{#1}}}
\newtheorem{thm}{Theorem}[section]
\newtheorem{lem}[thm]{Lemma}
\newtheorem{prop}[thm]{Proposition}
\newtheorem{exa}[thm]{Example}
\newtheorem{defi}[thm]{Definition}
\newtheorem{rek}[thm]{Remark}
\DeclareMathOperator{\supp}{supp}
\numberwithin{equation}{section}
\DeclareFontFamily{U}{mathx}{}
\DeclareFontShape{U}{mathx}{m}{n}{<-> mathx10}{}
\DeclareSymbolFont{mathx}{U}{mathx}{m}{n}
\DeclareMathAccent{\widehat}{0}{mathx}{"70}
\DeclareMathAccent{\widecheck}{0}{mathx}{"71}
\begin{document}

\title{Extensions and new characterizations of some greedy-type bases}

\author{Miguel Berasategui}

\email{\textcolor{blue}{\href{mailto: mberasategui@dm.uba.ar}{mberasategui@dm.uba.ar}}}
\address{IMAS - UBA - CONICET - Pab I, Facultad de Ciencias Exactas
y Naturales, Universidad de Buenos Aires, (1428), Buenos Aires, Argentina}

\author{Pablo M. Bern\'a}

\email{\textcolor{blue}{\href{mailto:pablo.berna@cunef.edu}{pablo.berna@cunef.edu}}}
\address{Departmento de Métodos Cuantitativos, CUNEF Universidad, 28040 Madrid, Spain}

\author{H\`ung Vi\d{\^e}t Chu}

\email{\textcolor{blue}{\href{mailto:hungchu2@illinois.edu}{hungchu2@illinois.edu}}}
\address{Department of Mathematics, University of Illinois at Urbana-Champaign, Urbana, IL 61820, USA}

\begin{abstract} 
Partially greedy bases in Banach spaces were introduced by Dilworth et al. as a strictly weaker notion than the (almost) greedy bases. In this paper, we study two natural ways to strengthen the definition of partial greediness. The first way produces what we call the consecutive almost greedy property, which turns out to be equivalent to the almost greedy property. Meanwhile, the second way reproduces the PG property for Schauder bases but a strictly stronger property for general bases.
\end{abstract}

\subjclass[2020]{41A65; 46B15}

\keywords{Almost greedy; partially greedy; bases.}

\thanks{The first author was supported by CONICET-PIP 1609 and ANPCyT PICT-2018-04104. The second author was supported
by the Grant PID2019-105599GB-I00/AEI/10.13039/501100011033  (Agencia Estatal de Investigación, Spain) and 20906/PI/18 from
Fundaci\'on S\'eneca (Regi\'on de Murcia, Spain).}

\maketitle

\section{Introduction}

One main goal of approximation theory is to approximate a vector (signal) $x$ using finite linear combinations of basis vectors. To do so, an adaptive, nonlinear approximation algorithm, also called sparse approximations, is desirable, where elements in an approximation are found by greedy steps; that is, largest terms are chosen for the aforementioned linear combinations. This sort of approximation is called the thresholding greedy algorithm (TGA) introduced by Konyagin and Temlyakov \cite{KT1}.  On the application side, the adaptive nature of sparse approximations makes them powerful in processing large data sets in the areas of signal processing and numerical computation. On the theoretical side, the last two decades have witnessed a flurry of papers studying sparse approximations from the functional analysis perspective. 

We can classify bases of a space based on the effectiveness of approximations given by the TGA. Amongst the  most studied in the literature, and ordered by decreasing strength in terms of the TGA's effectiveness, we find greedy bases \cite{KT1}, almost greedy bases \cite{DKKT}, (strong) partially greedy bases \cite{BBL, DKKT} and quasi-greedy bases \cite{KT1}. Almost greedy bases are those in which the TGA is better (up to some constant) than the best approximation via projections on the basis vectors, whereas partially greedy ones are those in which it is similarly better than the best approximation via partial sums. In this paper, we strengthen the concept of partially greedy bases in two natural ways, with the following aims. First, we study whether we can find naturally-defined types of greedy-like bases which lie strictly between strong partially greedy and almost greedy bases - which might be worthy of further research in themselves. Second, we look for characterizations of almost greedy bases in terms of formally weaker definitions, or of (strong) partially greedy bases in terms of formally stronger ones. Our choice of the partially greedy property is due to the fact that the approximations involve linear projections, which are generally easier to handle than non-linear ones. 
The first way of modifying the definition (see Definition~\ref{defi:  consecutive2}) surprisingly produces almost greedy bases, while the second way (see Definition~\ref{defi: superstrong}) reproduces the partially greedy property in the case of Schauder bases but a strictly stronger property in general. 

\color{black}

\section{Main definitions and notation}

\subsection{Bases in quasi-Banach spaces} 

A quasi-Banach space ($p$-Banach space, resp.) $\mathbb{X}$ is a vector space that is complete with respect to a \textbf{quasi-norm} (\textbf{$p$-norm}, resp.) $\|\cdot\|$. A quasi-norm $\|\cdot\|$ on $\mathbb{X}$ satisfies
\begin{enumerate}
	\item[(a)] $\|x\| \ge 0, \forall x\in\mathbb{X}$, and $\|x\| = 0 $ if and only if $x = 0$,
	\item[(b)] $\|ax\| = |a|\|x\|, \forall x\in\mathbb{X}$ and $a\in \mathbb{F}$, 
	\item[(c)] there exists $k > 0$ such that 
	$$\|x + y\|\ \le\ k(\|x\| + \|y\|).$$
\end{enumerate}
On the other hand, if $\|\cdot\|$ is a $p$-norm, then $\|\cdot\|$ satisfies (a), (b), and 
\begin{enumerate}
	\item[(d)] there exists $0<p\le 1$ such that 
	$$\|x+y\|^p\ \le\ \|x\|^p + \|y\|^p, \forall x, y\in\mathbb{X}.$$
\end{enumerate}

By the celebrated Aoki-Rolewicz’s Theorem, any quasi-Banach space is $p$-convex for some $0 < p\le 1$; that is, there exists $\mathbf C > 0$ such that 
$$\left\|\sum_{n = 1}^m x_n\right\|\ \le\ \mathbf C\left(\sum_{n=1}^m \|x_n\|^p\right)^{1/p}, n\in\mathbb{N}, x_n \in\mathbb{X}.$$
As a result, any quasi-Banach space can be renormed to become a $p$-Banach space. We, therefore, assume that our quasi-Banach space $\mathbb{X}$ has been renormed to be a $p$-Banach space.

\begin{prop}\cite[Corollaries 2.3 and 2.4]{AABW}
	Let $\mathbb{X}$ be a $p$-Banach space for some $0 < p\le 1$. Let $y\in \mathbb{X}$ and $(x_n)_{n\in J}\subset\mathbb{X}$ with $J$ finite. Then
	\begin{enumerate}
		\item For any scalars $(a_n)_{n\in J}$ with $0\le a_n\le 1$, 
		$$\left\|y + \sum_{n\in J}a_nx_n\right\|\ \le\ \mathbf A_p\sup_{A\subset J}\left\|y + \sum_{n\in J}x_n\right\|.$$
		\item For any scalars $(a_n)_{n\in J}$ with $|a_n|\le 1$, 
		$$\left\|y + \sum_{n\in J}a_nx_n\right\|\ \le\ \mathbf A_p\sup_{|\varepsilon_n| = 1}\left\|y + \sum_{n\in J}\varepsilon_n x_n\right\|.$$
		\item  For any scalars $(a_n)_{n\in J}$ with $|a_n|\le 1$, 
		$$\left\|\sum_{n\in J}a_nx_n\right\|\ \le\ \mathbf B_p\sup_{A\subset J}\left\|\sum_{n\in J}x_n\right\|.$$
	\end{enumerate}
\end{prop}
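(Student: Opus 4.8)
The plan is to derive all three inequalities from a single device: expanding each coefficient dyadically, which rewrites the vector on the left as a norm-convergent geometric series whose terms are among the vectors on the right, after which $p$-subadditivity finishes the job. I would first record the following lemma: given a bounded set $S\subseteq\mathbb F$, finitely many vectors $(x_n)_{n\in J}$ and $y\in\mathbb X$, if the scalars $(a_n)_{n\in J}$ each admit an expansion $a_n=\sum_{j\ge1}2^{-j}\lambda_{n,j}$ with $\lambda_{n,j}\in S$, then, using $y=\sum_{j\ge1}2^{-j}y$ and the finiteness of $J$,
\[
y+\sum_{n\in J}a_nx_n\ =\ \sum_{j=1}^{\infty}2^{-j}\Big(y+\sum_{n\in J}\lambda_{n,j}x_n\Big),
\]
where the series converges in $\mathbb X$ since its tail past index $N$ has $p$-th power of norm at most $2^{-Np}(2^p-1)^{-1}M^p$ with $M:=\sup_{(s_n)\in S^J}\|y+\sum_{n\in J}s_nx_n\|<\infty$ (finite because $J$ is finite and $S$ is bounded). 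Applying $p$-subadditivity to the partial sums and passing to the limit then gives $\|y+\sum_{n\in J}a_nx_n\|^p\le\big(\sum_{j\ge1}2^{-jp}\big)M^p=(2^p-1)^{-1}M^p$.

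Part (1) is the case $a_n\in[0,1]$ with $S=\{0,1\}$: each $a_n$ has a binary expansion with digits in $\{0,1\}$, and $\sup_{(s_n)\in\{0,1\}^J}\|y+\sum_{n\in J}s_nx_n\|=\sup_{A\subseteq J}\|y+\sum_{n\in A}x_n\|$ (the inner sums on the right-hand sides of (1) and (3) being read as ranging over $A$, as intended), so one may take $\mathbf A_p=(2^p-1)^{-1/p}$. For part (2) in the real scalar case, take $S=\{-1,1\}$: applying the binary expansion to $(1+a_n)/2\in[0,1]$ gives $a_n=\sum_{j\ge1}2^{-j}\eta_{n,j}$ with $\eta_{n,j}\in\{-1,1\}$, and $\sup_{(s_n)\in\{-1,1\}^J}\|y+\sum_n s_nx_n\|\le\sup_{|\varepsilon_n|=1}\|y+\sum_n\varepsilon_nx_n\|$. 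In the complex scalar case the simplest route is to write each $a_n$ in the closed unit disk as a midpoint $a_n=\tfrac12(u_n+v_n)$ of two unimodular scalars, so that $y+\sum_n a_nx_n=\tfrac12(y+\sum_n u_nx_n)+\tfrac12(y+\sum_n v_nx_n)$ and $p$-subadditivity gives the estimate with $\mathbf A_p=2^{(1-p)/p}$; alternatively one verifies that every point of the closed unit disk itself admits a "binary expansion" with unimodular digits and applies the lemma with $S=\{\varepsilon\in\mathbb F:|\varepsilon|=1\}$, recovering $(2^p-1)^{-1/p}$.

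For part (3) I would apply part (1) after splitting $J$. In the real case, partition $J=J^+\sqcup J^-$ by the sign of $a_n$ and write $\sum_{n\in J}a_nx_n=\sum_{n\in J^+}a_nx_n+\sum_{n\in J^-}(-a_n)(-x_n)$; all coefficients now lie in $[0,1]$, so applying the lemma with $y=0$, $S=\{0,1\}$ to each group — using $\|\sum_{n\in A}(-x_n)\|=\|\sum_{n\in A}x_n\|$ — and then $p$-subadditivity gives $\mathbf B_p=(2/(2^p-1))^{1/p}$. In the complex case, first split $\sum_n a_nx_n=\sum_n\mathrm{Re}(a_n)x_n+i\sum_n\mathrm{Im}(a_n)x_n$, use $\|i\sum_n\mathrm{Im}(a_n)x_n\|=\|\sum_n\mathrm{Im}(a_n)x_n\|$ and $|\mathrm{Re}(a_n)|,|\mathrm{Im}(a_n)|\le1$, and apply the real case to each summand. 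The only genuinely non-routine point is the complex scalar case of part (2) — either the midpoint decomposition or the unimodular-digit binary expansion, both elementary plane-geometry facts; I would also stress that a naive "convex combination / averaging" argument in place of the dyadic one produces constants depending on $\#J$, which is precisely why the geometrically weighted dyadic decomposition is needed. The remaining points — convergence and rearrangement of the series in the $p$-Banach setting and the bookkeeping of the constants — are routine.
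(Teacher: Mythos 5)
This proposition is not proved in the paper at all: it is quoted from \cite[Corollaries 2.3 and 2.4]{AABW}, so there is no in-paper argument to compare yours against. Your proof is correct and is essentially the standard convexity argument behind the cited result: expanding each coefficient dyadically with digits in $\{0,1\}$, $\{-1,1\}$, or the unit circle rewrites the left-hand vector as a geometrically weighted series of the extreme configurations, and $p$-subadditivity produces exactly the factor $\bigl(\sum_{j\ge 1}2^{-jp}\bigr)^{1/p}=(2^p-1)^{-1/p}=\mathbf A_p$, while your sign splitting and real/imaginary splitting recover precisely $\mathbf B_p=2^{1/p}\mathbf A_p$ in the real case and $4^{1/p}\mathbf A_p$ in the complex case; you also correctly read the misprinted right-hand sums $\sum_{n\in J}x_n$ as $\sum_{n\in A}x_n$. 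Two minor points worth recording: the midpoint decomposition $a_n=\frac12(u_n+v_n)$ in the complex case of (2) yields the constant $2^{(1-p)/p}$, which is not literally $\mathbf A_p$ but is dominated by it because $2^{1-p}(2^p-1)=2-2^{1-p}\le 1$ for $0<p\le 1$, so the stated inequality still follows; alternatively, the unimodular-digit expansion you mention does exist (choose each digit $\varepsilon_N$ in the direction of the current remainder, which keeps the remainder of modulus at most $2^{-N}$) and gives $\mathbf A_p$ exactly, so either route closes the only nontrivial step.
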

Throughout the paper, let $\mathbb{X}$ be a separable and infinite-dimensional quasi-Banach (or $p$-Banach space) over the field $\mathbb{F} = \mathbb{R}$ or $\mathbb{C}$. Let $\mathbb{X}^*$ be the dual space of $\mathbb{X}$. A collection $\mathcal{B} = (e_n)_{n=1}^\infty\subset \mathbb{X}$ is said to be a (semi-normalized) \textbf{basis} of $\mathbb{X}$ if 
\begin{enumerate}
    \item $\mathbb{X} = \overline{[e_n: n\in\mathbb{N}]}$, where $[e_n: n\in\mathbb{N}]$ denotes the span of $(e_n)_{n=1}^\infty$;
    \item there is a unique semi-normalized sequence $(e_n^*)_{n=1}^\infty\subset \mathbb{X}^*$ such that $e_j^*(e_k) = \delta_{j, k}$ for all $j, k\in\mathbb{N}$;
    \item there exist $c_1, c_2 > 0$ such that 
    $$0 \ <\ c_1 := \inf_n\{\|e_n\|, \|e_n^*\|\}\ \le\ \sup_n\{\|e_n\|, \|e_n^*\|\} \ =:\ c_2 \ <\ \infty.$$
\end{enumerate}
If $\mathcal{B}$ also satisfies 
\begin{enumerate}
\item[(4)] $\mathbb{X}^*\ =\ \overline{[e_n^*: n\in \mathbb{N}]}^{w^*},$
\end{enumerate}
then $\mathcal{B}$ is a \textbf{Markushevich basis}. Additionally, if the partial sum operators $S_m(x) = \sum_{n=1}^m e_n^*(x)e_n$ for $m\in \mathbb{N}$ are uniformly bounded, i.e., there exists $\mathbf C > 0$ such that 
\begin{enumerate}
\item[(5)] $\|S_m(x)\|\ \le\ \mathbf C\|x\|, \forall x\in \mathbb{X}, \forall m\in \mathbb{N}$,
\end{enumerate}
then we say $\mathcal{B}$ is a \textbf{Schauder basis}. Given a basis $\mathcal{B}$, we associate each $x\in\mathbb{X}$ with the formal series $\sum_{n\in\mathbb{N}} e_n^*(x)e_n$. 
Since $\mathcal{B}$ and $\mathcal{B}^*$ are both semi-normalized, we have $\lim_{n\rightarrow\infty}|e_n^*(x)| = 0$. 

We mention some notation that appear throughout the paper. Fix $x\in\mathbb{X}$, finite subsets $A$ and $B\subset \mathbb{N}$, and $\varepsilon = (\varepsilon_n)\subset\mathbb{F}^{\mathbb{N}}$, where $|\varepsilon_n| = 1$. Let
\begin{enumerate}
    \item[(a)] $\|x\|_\infty = \max_{n}|e_n^*(x)|$ and $\supp(x) = \{n: e_n^*(x)\neq 0\}$,
    \item[(b)] $P_A(x) := \sum_{n\in A}e_n^*(x)e_n$ and $P_{A^c}(x) := x - P_A(x)$,
    \item[(c)] $1_A : = \sum_{n\in A}e_n\mbox{ and }1_{\varepsilon A} = \sum_{n\in A}\varepsilon_n e_n$,
    \item[(d)] $\mathbf A_p = (2^p-1)^{-1/p}$ for $0< p \le 1$ and $\mathbf B_p = \begin{cases}2^{1/p}\mathbf A_p&\mbox{ if }\mathbb{F} = \mathbb{R},\\ 4^{1/p}\mathbf A_p&\mbox{ if }\mathbb{F} = \mathbb{C}.\end{cases}$
\end{enumerate}

We write $A < B$ to mean that $a< b$ for all $a\in A$ and $b\in B$\footnote{Note that $A>\emptyset$ and $A < \emptyset$ for any $A\subset\mathbb{N}$.}, while $\sqcup_{i\in I} A_i$, for some index set $I$ and sets $(A_i)_{i\in I}$, means that the $A_i$'s are pairwise disjoint. Finally, for a number $a$, 
$$A|_a \ :=\ \{n\in A: n\ge a\}.$$ 
Finally, for every $m\in \mathbb{N}_{0}$, let $\mathcal{I}^{(m)}:=\{A\subset \mathbb{N}: |A|=m\mbox{ and } A \text{ is an interval}\}$,  
$$\mathcal{I}^{\le m}\ :=\ \bigcup_{0\le k\le m}\mathcal{I}^{(k)},\mbox{ and } \mathcal{I}\ :=\ \bigcup_{m\in \mathbb{N}_0} \mathcal{I}^{(m)}.$$

\subsection{Thresholding Greedy Algorithm and greedy-like bases} In 1999, Konyagin and Temlyakov \cite{KT1} introduced the TGA, which since then has been extensively studied by many researchers. For each $x\in\mathbb{X}$, the algorithm chooses the largest coefficients (in modulus) with respect to a basis $\mathcal{B}$. In particular, a set $A\subset \mathbb{N}$ is an $m$-\textbf{greedy set} of $x$ if $|A| = m$ and 
$$\min_{n\in A}|e_n^*(x)|\ \ge\ \max_{n\notin A}|e_n^*(x)|.$$
The corresponding \textbf{greedy sum} is 
$$G_m(x)\ :=\ \sum_{n\in A}e_n^*(x)e_n.$$
Note that for a vector $x$ having two or more coefficients of equal modulus, $m$-greedy sets and greedy sums may not be unique. Let $G(x, m)$ denote the set of all greedy sets of $x$ of size $m$, and let $\Lambda_m(x)$ be the only $B\in G(x,m)$ such that 
$$B\backslash A \ <\ A\backslash B, \forall A\in G(x,m)\backslash\{B\}.$$

The minimal condition to guarantee the convergence of the TGA is quasi-greediness.

\begin{defi}[\cite{KT1}]\label{d3}\normalfont
A basis $\mathcal{B}$ in a quasi-Banach space $\mathbb X$ is quasi-greedy if there exists $\mathbf C > 0$ such that
		$$\|G_m(x)\|\ \le\ \mathbf C\|x\|,\forall x\in\mathbb{X}, m\in\mathbb{N}, \forall G_m(x).$$
		The least such $\mathbf C$ is denoted by $\mathbf C_q$, called the quasi-greedy constant. Also when $\mathcal{B}$ is quasi-greedy, let $\mathbf C_\ell$ be the least constant such that $$\|x-G_m(x)\|\ \le\ \mathbf \mathbf \mathbf C_\ell\|x\|,\forall x\in\mathbb{X}, m\in\mathbb{N}, \forall G_m(x).$$
		We call $\mathbf C_\ell$ the suppression quasi-greedy constant.
\end{defi}
 
The relation between this property and the convergence was given by Wojtaszczkyk in \cite{W}, where the author proved that a basis in a quasi-Banach space is quasi-greedy if and only if
$$\lim_{n\rightarrow\infty}\Vert x-G_n(x)\Vert=0,\; \forall x\in\mathbb X.$$

From another point of view, the strongest property in terms of convergence of the TGA is greediness (\cite{KT1}): a basis is greedy if there exists $C>0$ such that
$$\Vert x-G_m(x)\Vert\leq C\inf_{z\in\mathbb X : \vert\supp(z)\vert\leq m}\Vert x-z\Vert,\; \forall m\in\mathbb N, \forall x\in\mathbb X, \forall G_m(x).$$

\begin{rek}
Konyagin and Temlyakov proved in \cite{KT1} that a basis in a Banach space is greedy if and only if the basis is unconditional and democratic, and the characterization was extended to quasi-Banach spaces in \cite{AABW}. We recall that a basis $\mathcal{B}$ in a quasi-Banach space $\mathbb X$ is $\mathbf K$-unconditional with $\mathbf K > 0$ if for all $N\in\mathbb{N}$, $$\left\|\sum_{n=1}^Na_ne_n\right\|\ \le\ \mathbf K\left\|\sum_{n=1}^N b_n e_n\right\|,$$
whenever $|a_n|\le |b_n|$ for all $1\le n\le N$. Also, a basis is $\mathbf C$-democratic with $\mathbf C>0$ if
$$\Vert 1_A\Vert\leq \mathbf C\Vert 1_B\Vert,\; \forall \vert A\vert\leq\vert B\vert<\infty.$$

\end{rek}
Our paper is concerned with \textbf{almost greedy and partially greedy bases} introduced by Dilworth, Kalton, Kutzarova, and Temlyakov \cite{DKKT}, and \textbf{strong partially greedy bases}, introduced in \cite{BBL}.

\begin{defi}\normalfont \label{defAG} A basis $\mathcal B$ in a quasi-Banach space $\mathbb X$ is almost greedy if there exists $\mathbf C\ge 1$ such that
\begin{equation}\label{e7}\|x-G_m (x)\|\ \le\ \mathbf C\widetilde{\sigma}_m(x), \forall x\in \mathbb{X}, \forall m\in \mathbb{N}, \forall G_m(x).\end{equation}
where $\widetilde{\sigma}_m(x)=\widetilde{\sigma}_m[\mathcal B](x):= \inf\{\|x-P_A(x)\|: |A| = m\}$. If $\mathbf C$ verifies \eqref{e7}, then $\mathcal{B}$ is said to be $\mathbf C$-almost greedy. 
\end{defi}

The main characterization of almost-greedy bases was given in \cite{DKKT} where the authors proved that a basis in a Banach space is almost-greedy if and only if the basis is quasi-greedy and democratic.

\begin{rek}\rm Several papers have studied characterizations of almost greedy bases in terms of other greedy-like properties, see for example \cite[Theorem 3.3]{AA},  \cite[Theorem 1.10]{B2}, \cite[Theorem 1.12]{BC},  \cite[Corollary 4.3]{BL}, \cite[Theorem 5.4]{C1}, \cite[Theorems 3.2 and 3.6]{DKK}, and  \cite[Theorem 3.3]{DKKT}. Also, the extension of the main characterization of almost-greedy bases in terms of quasi-greediness and democracy was extended in \cite{AABW} to the context of quasi-Banach spaces.
\end{rek}

\begin{defi}\normalfont
A basis $\mathcal{B}$ in a quasi-Banach space $\mathbb X$ is 
\begin{enumerate}
\item partially greedy if there exists $\mathbf C > 0$ such that
\begin{equation}\label{e3}\|x-G_m(x)\|\ \le\ \mathbf C\|x-S_m(x)\|, \forall x\in\mathbb{X},\forall m\in\mathbb{N},\forall G_m(x).\end{equation}
\item strong partially greedy if there exists $\mathbf C > 0$ such that
\begin{equation}\label{e4}\|x-G_m(x)\|\ \le\ \mathbf C\min_{0\le n\le m}\|x-S_n(x)\|, \forall x\in\mathbb{X},\forall m\in\mathbb{N}, \forall G_m(x).\end{equation}
\end{enumerate}
\end{defi}

In \cite{DKKT}, the authors characterized partially-greediness for Schauder bases in Banach spaces as those bases verifying quasi-greediness and conservativeness, where the last condition is exactly as democracy but taken $A<B$. Also, the same characterization works for strong partially greedy bases for general Markushevich bases and the extension for quasi-Banach spaces was proved in \cite{B3}.

\subsection{Main results}

The right sides of \eqref{e3} and \eqref{e4} measure the distance between $x$ and the projection of $x$ onto the first consecutive vectors in $\mathcal{B}$. Hence, it is natural to investigate a similar condition without restricting the consecutive projection to these first terms. This motivates us to introduce what we call \textbf{consecutive almost greedy} bases.

\begin{defi}\label{defi: consecutive2}\normalfont
A basis $\mathcal B$ in a quasi-Banach space $\mathbb X$ is said to be consecutive almost greedy (CAG) if there exists $\mathbf C\ge 1$ such that 
$$\|x-G_m(x)\|\ \le\ \mathbf C \widecheck{\sigma}_m(x), \forall x\in\mathbb{X}, \forall m\in\mathbb{N}, \forall G_m(x),$$
where 
\begin{equation}\label{e5}\widecheck{\sigma}_m(x)\ =\ \widecheck{\sigma}_m[\mathcal{B}](x)\ :=\ \inf\left\{\left\|x-P_I(x) \right\| \,:\, I\in \mathcal{I}, |I|= m\right\}.
\end{equation}
The least such $\mathbf C$ is denoted by $\mathbf C_{ca}$.
\end{defi}

It is clear that $\widetilde{\sigma}_m(x)\le \widecheck{\sigma}_m(x)\le \|x-S_m(x)\|, \forall x\in\mathbb{X}, \forall m\in\mathbb{N}$, so we know that
$$\mbox{ almost greedy } \Rightarrow \mbox{ consecutive almost greedy } \Rightarrow \mbox{ partially greedy}.$$

Given a basis $\mathcal{B}=(e_n)_{n\in\mathbb{N}}$ and a bijection $\pi$ on $\mathbb{N}$, let  $\mathcal{B}_{\pi}=(e_{\pi(n)})_{n\in\mathbb{N}}$ be the reordered basis corresponding to $\pi$. Since the property of being a greedy set is not affected by reorderings, it is easy to see that $\mathcal{B}$ is $\mathbf C$-almost greedy if and only if $\mathcal{B}_{\pi}$ is $\mathbf C$-CAG for every bijection $\pi$. 

\begin{thm}\label{t1}Let $\mathcal{B}$ be a basis in a $p$-Banach space $\mathbb{X}$. The following statements are equivalent: 
\begin{enumerate}
\item[(i)] $\mathcal{B}$ is almost greedy. 
\item[(ii)] There is $\mathbf C\ge 1$ such that 
$$\|x-G_m(x)\|\ \le\ \mathbf C\widecheck{\sigma}_k(x), \forall x\in\mathbb{X}, \forall m\in\mathbb{N}, \forall G_m(x), \forall 0\le k\le m.$$
\item[(iii)] $\mathcal{B}$ is CAG.
\end{enumerate}
Moreover, if $\mathcal{B}$ is $1$-CAG and $p=1$, then $\mathcal{B}$ is $1$-almost greedy. 
\end{thm}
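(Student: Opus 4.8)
The plan is to close the cycle (i)~$\Rightarrow$~(ii)~$\Rightarrow$~(iii)~$\Rightarrow$~(i). The implication (i)~$\Rightarrow$~(ii) is immediate: $m\mapsto\widetilde{\sigma}_m(x)$ is non-increasing (a near-optimal set of size $k$ may be enlarged to one of size $m$ by adjoining indices on which $|e_n^*(x)|$ is arbitrarily small) and $\widetilde{\sigma}_k(x)\le\widecheck{\sigma}_k(x)$, so almost greediness gives $\|x-G_m(x)\|\le\mathbf{C}\,\widetilde{\sigma}_m(x)\le\mathbf{C}\,\widetilde\sigma_k(x)\le\mathbf{C}\,\widecheck{\sigma}_k(x)$ for every $0\le k\le m$. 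The implication (ii)~$\Rightarrow$~(iii) is the case $k=m$.

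The content is (iii)~$\Rightarrow$~(i). The first point is that CAG forces quasi-greediness almost for free: for fixed $m$ and the interval $I_t=\{t,\dots,t+m-1\}$ one has $\|P_{I_t}(x)\|^p\le\sum_{n=t}^{t+m-1}|e_n^*(x)|^p\|e_n\|^p\to0$ because the coefficients tend to $0$, hence $\widecheck{\sigma}_m(x)^p\le\inf_t(\|x\|^p+\|P_{I_t}(x)\|^p)=\|x\|^p$; therefore $\|x-G_m(x)\|\le\mathbf{C}_{ca}\|x\|$, so $\mathcal B$ is quasi-greedy with $\mathbf{C}_\ell\le\mathbf{C}_{ca}$ and $\mathbf{C}_q\le(1+\mathbf{C}_{ca}^p)^{1/p}$. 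By the quasi-Banach version (\cite{AABW}) of the Dilworth--Kalton--Kutzarova--Temlyakov characterization \cite{DKKT}, it then remains only to prove that $\mathcal B$ is democratic.

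I would obtain democracy from two one-sided comparisons, each coming from a single cleverly chosen vector. \emph{Intervals are smallest:} given an interval $J$ and a finite set $E$ with $J\cap E=\emptyset$ and $|J|=|E|=m$, apply CAG to $x=1_J+(1+\varepsilon)1_E$; its unique $m$-greedy set is $E$, so $x-G_m(x)=1_J$, while the choice $I=J$ in \eqref{e5} gives $\widecheck{\sigma}_m(x)\le\|x-P_J(x)\|=(1+\varepsilon)\|1_E\|$, and letting $\varepsilon\downarrow0$ yields $\|1_J\|\le\mathbf{C}_{ca}\|1_E\|$. \emph{Conservativeness:} since $\widecheck{\sigma}_m(x)\le\|x-S_m(x)\|$, CAG implies \eqref{e3} with constant $\mathbf{C}_{ca}$, so for $A<B$ with $|A|=|B|=m$ and $a=\max A$ we may take $x=1_{\{1,\dots,a\}}+1_B$ and the $a$-greedy set $G=(\{1,\dots,a\}\setminus A)\cup B$, which gives $x-G_a(x)=1_A$ and $x-S_a(x)=1_B$, whence $\|1_A\|\le\mathbf{C}_{ca}\|1_B\|$. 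Now for finite sets $A,B$ with $|A|=|B|=m$ choose an interval $J$ of size $m$ lying entirely to the right of $A\cup B$; then $\|1_A\|\le\mathbf{C}_{ca}\|1_J\|\le\mathbf{C}_{ca}^2\|1_B\|$, and unequal sizes reduce to this via the ``almost increasing'' behaviour of $(\|1_A\|)$ for quasi-greedy bases. This gives democracy, hence (i).

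For the last assertion I would run exactly this argument while tracking constants when $p=1$ and $\mathbf{C}_{ca}=1$: the displays above then give suppression-quasi-greedy constant $1$, conservativeness constant $1$ and, via $\varepsilon\downarrow0$, the estimate $\|1_J\|\le\|1_E\|$, hence $1$-democracy for equal-size sets --- which is all that $\widetilde\sigma_m$ involves. The delicate point I expect is the sharp implication ``$1$-suppression-quasi-greedy $+$ $1$-democratic $\Rightarrow$ $1$-almost greedy'': I would establish it by comparing $\|x-P_A(x)\|$ (for a greedy set $A$ with $|A|=m$) to $\|x-P_B(x)\|$ (for $|B|=m$), decomposing $x-P_B(x)$ along $A\setminus B$, $B\setminus A$ and $(A\cup B)^c$ and applying $1$-quasi-greediness and $1$-democracy to the equal-size disjoint pair $A\setminus B$, $B\setminus A$, verifying that no step costs a constant when $p=1$. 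The main obstacle is this constant bookkeeping in the Banach case; the qualitative equivalence is comparatively soft once the two one-vector comparisons are available.
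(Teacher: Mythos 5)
Your cycle (i) $\Rightarrow$ (ii) $\Rightarrow$ (iii) $\Rightarrow$ (i) is correct and essentially the paper's argument: (i) $\Rightarrow$ (ii) is the monotonicity of $k\mapsto\widetilde{\sigma}_k(x)$ obtained by adjoining far-out indices with small coefficients (this is exactly Lemma \ref{remarkAG}); quasi-greediness is obtained, as in the paper, by pushing the interval to infinity; and your democracy step (the comparison $\|1_J\|\le \mathbf C_{ca}\|1_E\|$ for an interval $J$ and a disjoint set $E$ of the same cardinality, the conservative-type comparison via $\widecheck{\sigma}_m(x)\le\|x-S_m(x)\|$, and the reduction of unequal cardinalities via quasi-greediness) is a harmless variant of the paper's two displays. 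So the equivalence part stands.

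The genuine gap is the final assertion. For $p=1$ and $\mathbf C_{ca}=1$ you extract only $1$-suppression quasi-greediness and $1$-democracy, and then claim that ``$1$-suppression quasi-greedy $+$ $1$-democratic $\Rightarrow$ $1$-almost greedy'' can be established by tracking constants in the decomposition of $x-P_B(x)$ along $A\setminus B$, $B\setminus A$ and $(A\cup B)^c$. That implication is false, so no bookkeeping can rescue it: $1$-almost greediness forces symmetry for largest coefficients (Property (A)) --- apply the definition to $x+1_{\varepsilon A}+(1+\delta)1_{\eta B}$ and let $\delta\to 0$ --- and Property (A) does not follow from $1$-democracy plus $1$-suppression quasi-greediness; this is precisely the phenomenon for which Property (A) was introduced in the greedy setting, where $1$-unconditional plus $1$-democratic does not imply $1$-greedy. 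At the technical level, your decomposition must at some point trade $P_{A\setminus B}(x)$ for $P_{B\setminus A}(x)$ inside the norm, and with democracy alone this goes through a triangle inequality plus a separate bound on $\|P_{B\setminus A}(x)\|$, which inevitably yields a constant of the form $1+C$, never $1$. The paper's proof of the ``moreover'' part instead extracts from $1$-CAG a strictly stronger, Property (A)-type inequality: applying the definition to $y=x+\varepsilon_k e_k+(1+\epsilon)\varepsilon_j e_j$ with $m=1$ and the interval $I=\{k\}$ gives $\|x+\varepsilon_k e_k\|\le\|x+\varepsilon_j e_j\|$ for every $x$ with $\|x\|_\infty\le 1$ and all $k,j\notin\supp(x)$ (inequality \eqref{1almost}), and then concludes by an inductive argument together with the characterization of $1$-almost greedy bases in \cite[Theorem 1.5]{AA}. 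Your argument needs this extra extraction from $1$-CAG (or some equivalent derivation of Property (A)); it cannot be replaced by constant-tracking in the quasi-greedy-plus-democratic route.
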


Surprisingly, this first result shows that if $\mathcal{B}_{\pi}$ is CAG for just one bijection, then we already have an almost greedy basis (though not necessarily with the same constant.) In Example \ref{E1}, we further show that this result is unexpected because $\widecheck{\sigma}_m(x)$ is not bounded by $\widetilde{\sigma}_m(x)$.

Next, we talk about almost greediness using $1$-dimensional subspaces.  In \cite[Corollary 1.8]{BB}, the authors characterized greedy bases as those bases for which there is $\mathbf C > 0$ such that 
$$\|x-G_m(x)\|\ \le\ \mathbf C \inf \{d(x,[1_{A}]): A\subset\mathbb{N}, |A| = m\}, \forall x\in\mathbb{X}, \forall m\in\mathbb{N},\forall G_m(x).$$
A similar characterization was also proven for almost greedy Schauder bases in \cite[Theorem 2.15]{DK}: a Schauder basis is almost greedy if and only if there is $\mathbf C>0$ such that for all $x\in\mathbb{X}$, we have
$$\|x-P_{\Lambda_m(x)}(x)\|\ \le\ \mathbf C d(x,[1_{A}])$$
for all $A\subset \mathbb{N}$, $|A| = m$, either $A > \Lambda_m(x)$ or $A < \Lambda_m(x)$. 
In the next proposition, we shall use approximations involving both intervals and 1-dimensional subspaces to characterize almost greedy bases. 

\begin{prop}\label{1dim} Let $\mathcal{B}$ be a basis of a quasi-Banach space $\mathbb{X}$. The following statements are equivalent: 
\begin{enumerate}
\item[(i)]  $\mathcal{B}$ is almost greedy.
\item[(ii)] There is $\mathbf C > 0$ such that, for every $x\in \mathbb{X}$, $m\in \mathbb{N}$, and $A\in G(x,m)$,
$$\|x-P_A(x)\|\ \le\ \mathbf C \inf\{\|x-t1_{\varepsilon B}\|: t\in \mathbb{R}, |B|\le m,  B\sqcup A, \mbox{ sign }\varepsilon\}.$$

\item[(iii)] There is $\mathbf C>0$ such that, for every $x\in \mathbb{X}$ and $m\in \mathbb{N}$, there is $A\in G(x,m)$ for which
$$\|x-P_A(x)\|\le \mathbf C\inf\{\|x-t 1_{I}\|: t\in \mathbb{R}, I\in \mathcal{I}, I < A \mbox{ or } A < I, |I\cap \supp(x)|\le m \}.$$
\end{enumerate}
\end{prop}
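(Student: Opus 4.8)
The plan is to establish the cycle (i) $\Rightarrow$ (ii) $\Rightarrow$ (iii) $\Rightarrow$ (i), since (ii) is a formally weaker statement than almost greediness (it only tests against one-dimensional perturbations with disjoint support), and (iii) is formally weaker than (ii) in two respects: it only asks for \emph{some} greedy set to work, and it restricts the competing one-dimensional subspace to be an interval lying entirely before or after $A$. The implication (i) $\Rightarrow$ (ii) should be essentially immediate: if $\mathcal{B}$ is $\mathbf C$-almost greedy and $B\sqcup A$ with $|B|\le m$, then $t1_{\varepsilon B}$ has support of size $\le m$ disjoint from $A$; writing $x-t1_{\varepsilon B}$ and noting that $A$ remains a greedy set (or can be completed to one) of size $m$ for a vector agreeing with $x$ off $B$, one bounds $\|x-P_A(x)\|$ by $\widetilde\sigma_m$ of that vector, which in turn is $\le\|x-t1_{\varepsilon B}\|$ up to the quasi-norm constant. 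The implication (ii) $\Rightarrow$ (iii) is trivial since the infimum in (iii) is over a smaller set and (ii) holds for \emph{every} greedy set.

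The substance is (iii) $\Rightarrow$ (i). Here I would invoke the known characterization of almost greedy bases as quasi-greedy plus democratic (valid in the quasi-Banach setting by \cite{AABW}), so it suffices to extract these two properties from (iii). For \textbf{quasi-greediness}: given $x$ and a greedy sum $G_m(x)$, apply (iii) to get a greedy set $A$ with $\|x-P_A(x)\|\le \mathbf C\inf\{\ldots\}$; taking $t=0$ (so the competing vector is just $x$ itself, with $I=\emptyset$ allowed) gives $\|x-P_A(x)\|\le \mathbf C\|x\|$, i.e. the suppression operator along $A$ is bounded. A standard argument (as in the proof that suppression quasi-greedy implies quasi-greedy, combined with the uniqueness structure of greedy sets) upgrades this to a bound on \emph{all} $G_m(x)$, using that different greedy sets of the same size differ only in the ``tie'' coefficients and a perturbation trick. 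For \textbf{democracy}: given finite sets $A,D$ with $|A|\le|D|$, I want $\|1_A\|\le \mathbf C'\|1_D\|$. The idea is to build a vector $x$ whose large coefficients sit on a set $G$ that (after the canonical choice of greedy set) produces something comparable to $1_A$, while an interval $I$ far to the right or left with $|I\cap\supp(x)|\le m$ witnesses a competing one-dimensional approximation of norm comparable to $\|1_D\|$. Concretely one takes $x = 1_A + \delta\, 1_{I'}$ for a suitable interval $I'$ with $|I'|$ large and $\delta$ small, arranges that a greedy set of size $m=|A|$ recovers $A$, and then the interval-restricted one-dimensional infimum in (iii), evaluated at the translate $t$ that kills the constant coefficient on the overlap, is bounded below in terms of $\|1_{I'}\|$; democracy between arbitrary sets then follows by comparing both $\|1_A\|$ and $\|1_D\|$ to norms of long intervals, using the fact (derivable from the quasi-greediness just proved, cf. the standard lemmas on quasi-greedy bases) that $\|1_E\|$ for $|E|=N$ is, up to constants, monotone and comparable across sets of the same size.

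The main obstacle I anticipate is the democracy step: unlike the full almost greedy definition, condition (iii) only compares $x$ to one-dimensional subspaces spanned by \emph{intervals} disjoint-in-support from and positioned before/after the greedy set, so one cannot directly test $1_A$ against $1_D$ for an arbitrary $D$. One must instead route the comparison through long intervals, which forces a preliminary lemma of the form ``$\|1_I\|$ for an interval $I$ controls $\|1_D\|$ for any $D$ with $|D|\le|I|$'' — this is where conservativeness/quasi-greediness has to be combined carefully, and where the restriction to \emph{some} greedy set (rather than all) must be handled, since we get to choose the test vector $x$ and hence can arrange its coefficients to avoid ties, pinning down $\Lambda_m(x)$. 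A secondary technical point is tracking the quasi-norm constant $k$ (or the $p$-convexity constant) through these perturbation arguments, but that is routine given Proposition~\cite[Cor. 2.3--2.4]{AABW} quoted above. I expect the final constant in (i) to be of the form $\mathbf C^{O(1)}$ times powers of $\mathbf A_p,\mathbf B_p$, with no attempt at optimality.
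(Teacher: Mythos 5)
There are two genuine gaps. First, (i) $\Rightarrow$ (ii) is not ``essentially immediate'', and the claim you base it on is false: for a competitor $t1_{\varepsilon B}$, the set $A$ need not remain an $m$-greedy set of $y=x-t1_{\varepsilon B}$, nor sit inside one --- if $|B|=m$ and $|t|$ is large, the unique $m$-greedy set of $y$ is $B$ itself, which is disjoint from $A$. Even in the favourable case $A\in G(y,m)$, almost greediness applied to $y$ controls $\|y-P_A(y)\|=\|x-P_A(x)-t1_{\varepsilon B}\|$, not $\|x-P_A(x)\|$; to recover the latter you must prove $\|t1_{\varepsilon B}\|\le C\|x-t1_{\varepsilon B}\|$, and that estimate is the real content of the implication. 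The paper obtains it by splitting into the cases $|t|\le 2a$ and $|t|>2a$ with $a=\min_{n\in A}|e_n^*(x)|$, choosing a greedy set of $x-t1_{\varepsilon B}$ of size $|B|$, and combining quasi-greediness with disjoint super-democracy via Theorem \ref{theorem: AGiffQGDem}; nothing in your sketch replaces this. Relatedly, your justification of (ii) $\Rightarrow$ (iii) (``the infimum in (iii) is over a smaller set'') is not correct: (iii) admits intervals $I$ with $|I|>m$ provided only $|I\cap\supp(x)|\le m$, and such $t1_I$ are not competitors in (ii); this length-versus-support distinction is precisely the point of the remark following the proposition and of Example \ref{example: not1dimintervals}.

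Second, in (iii) $\Rightarrow$ (i) your democracy step is circular. You appeal to ``the fact (derivable from the quasi-greediness just proved) that $\|1_E\|$ is, up to constants, comparable across sets of the same size'' --- that fact \emph{is} democracy, and quasi-greediness does not imply it (the $1$-unconditional, hence quasi-greedy, basis of $\ell_p\times\ell_q$ in Example \ref{example: not1dimintervals} is not even partially democratic). Moreover, your concrete test vector $x=1_A+\delta 1_{I'}$ with $\delta$ small and $|I'|$ large does not interact correctly with (iii): if the greedy set of size $m=|A|$ is $A$, admissible intervals must satisfy $|I\cap\supp(x)|\le |A|$, so intervals covering $I'$ are excluded when $|I'|>|A|$; and when $|I'|\le |A|$ the inequality you can extract reads $\delta\|1_{I'}\|\le C\|1_A\|$ (interval dominated by set), which is the direction you do not need for $\|1_A\|\lesssim\|1_B\|$. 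The paper instead runs the comparison through two test vectors, $1_A+(1+\epsilon)1_{I_1}$ and $1_{I_1}+(1+\epsilon)1_B$ with $I_1$ an interval, $I_1>A\cup B$, $|I_1|=|B|$, so that the greedy sets supplied by (iii) are forced to be $I_1$ and $B$ respectively, and the competing intervals ($I_2$, the smallest interval containing $A$, and $I_1$) satisfy the constraint $|I\cap\supp(x)|\le m$; your sketch contains no valid substitute for the half $\|1_A\|\lesssim\|1_{I_1}\|$. The quasi-greediness part of your (iii) $\Rightarrow$ (i) (take $t=0$, $I=\emptyset$, then a small perturbation so the chosen greedy set is the one you want) does coincide with the paper's argument, and your overall strategy of reducing (iii) $\Rightarrow$ (i) to quasi-greedy plus democratic is the right one.
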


\begin{rek}\normalfont
Note the contrast between Theorem \ref{t1} and Proposition \ref{1dim}: in the former, we limit the length of the intervals, whereas in the latter, we only limit the cardinality of their intersection with the support of $x$. One may wonder whether there is a characterization of almost greedy bases that strengthens Proposition \ref{1dim} by limiting the lengths of the intervals to the cardinality of the relevant greedy sets. Example \ref{example: not1dimintervals} shows that there is no such characterization.
\end{rek}

Finally, we study bases that satisfy a stronger condition than \eqref{e4}. 
\begin{defi}\label{defi: superstrong}\normalfont 
A basis is said to be super-strong partially greedy if there exists $\mathbf C > 0$ such that
\begin{equation}\label{e50}\|x-G_m(x)\|\ \le\ \mathbf C\widehat{\widehat{\sigma}}_m(x),\end{equation}
where 
$$\widehat{\widehat{\sigma}}_n(x)\ =\ \widehat{\widehat{\sigma}}_n[\mathcal{B}](x)\ :=\ \inf_{a_n\in\mathbb{F}}\left\{\left\|x-\sum_{n\in A}a_ne_n\right\|: A\subset \{1, \ldots, m\}\right\}.$$
\end{defi}
For Schauder bases, \eqref{e50} is equivalent to both \eqref{e3} and \eqref{e4}. Indeed, from definitions, 
$$\eqref{e50}\ \Longrightarrow\ \eqref{e4} \ \Longrightarrow\ \eqref{e3}.$$
Hence, we need only to verify that $\eqref{e3} \Longrightarrow \eqref{e50}$, i.e., there exists $\mathbf{D}$ such that
\begin{equation}\label{e51}\|x-S_m(x)\|\ \le\ \mathbf D\widehat{\widehat{\sigma}}_m(x), \forall x\in\mathbb{X}, \forall m\in\mathbb{N}.\end{equation}
Pick $x\in\mathbb{X}$, $m\in \mathbb{N}$, $A\subset\{1, \ldots, m\}$, and $(a_n)_{n\in A}\subset \mathbb{F}$. Since our basis is Schauder, we have
$$\|x-S_m(x)\|\ \le\ (1+\mathbf K)\left\|x-\sum_{n\in A}a_ne_n\right\|,$$
where $\mathbf K$ is the basis constant. Taking the infinum over all $(a_n)_{n\in\mathbb{F}}$ and over all subsets $A\subset\{1,\ldots,m\}$, we have \eqref{e51}.

However, the equivalence does not hold in general if we drop the Schauder condition, which we state as the following theorem. The theorem involves conditional almost greedy bases, which are well-known to exist (\cite[Example 10.2.9]{AK}.)

\begin{thm}\label{t3}
Let $\mathcal{B}$ be a conditional almost greedy basis of a Banach space $\mathbb{X}$. There is a bijection $\pi$ on $\mathbb{N}$ and $x\in\mathbb{X}$ such that 
$$\sup_{m\in\mathbb{N}, G_m(x)}\frac{\|x-G_m(x)\|}{\widehat{\widehat{\sigma}}_m[\mathcal{B}_\pi](x)}\ =\ \infty.$$
Therefore, $\mathcal{B}_\pi$ is strong partially greedy but is not super-strong partially greedy. 
\end{thm}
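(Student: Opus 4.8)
The plan is to exploit the fact that a conditional almost greedy basis $\mathcal{B}$ is quasi-greedy and democratic but not unconditional, and then to reorder it so that the obstruction to unconditionality is ``pushed'' into the super-strong partially greedy inequality. First I would recall that since $\mathcal{B}$ is almost greedy, it is quasi-greedy and hence, for any bijection $\pi$, the reordered basis $\mathcal{B}_\pi$ is still quasi-greedy with the same constants and still democratic; in particular $\mathcal{B}_\pi$ remains almost greedy, hence strong partially greedy by the implications recorded in the excerpt. The point is that $\widehat{\widehat{\sigma}}_m[\mathcal{B}_\pi](x)$ only allows free coefficients on a subset of $\{1,\dots,m\}$ in the \emph{new} order, so it can be much smaller than $\widetilde{\sigma}_m[\mathcal{B}_\pi](x)$ when the greedy set of $x$ sits near the front of the reordered basis.

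The concrete construction I would use: because $\mathcal{B}$ is conditional, there exist finitely supported vectors $x^{(j)}$ and indices so that the partial-sum-type norms $\|\sum_{n\in F_j} a_n^{(j)} e_n\|$ blow up relative to $\|\sum_{n\in F_j}\varepsilon_n a_n^{(j)} e_n\|$ for suitable signs (equivalently, the basis is not suppression unconditional, so one finds vectors $x^{(j)}$ with a coordinate subset $B_j$ such that $\|x^{(j)} - P_{B_j}(x^{(j)})\|$ is large while $\|x^{(j)}\|$ is controlled, or vice versa, with the ratio tending to infinity). I would choose disjointly supported such blocks, with supports $A_1 < A_2 < \cdots$, rescaled so the coefficients on $A_j$ all have modulus slightly larger than those on $A_{j+1}$, and set $x = \sum_j x^{(j)}$ (convergence guaranteed by the $p$-Banach estimates and a geometric decay in the norms). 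Then I would define $\pi$ so that for each $j$ the ``bad'' coordinate subset that must be cancelled to make $x^{(j)}$ small gets mapped to an initial segment of $\mathbb{N}$, i.e. $\pi^{-1}$ lists those coordinates first. With this choice, for an appropriate $m = m_j$, the greedy set $\Lambda_m(x)$ consists (up to the ambiguity in equal coefficients) of $A_1\cup\cdots\cup A_j$, so $\|x - G_m(x)\| = \|\sum_{i>j} x^{(i)}\|$ stays bounded below, while $\widehat{\widehat{\sigma}}_m[\mathcal{B}_\pi](x)$ is at most $\|x - \sum_{n\in C} a_n e_{\pi(n)}\|$ where $C\subset\{1,\dots,m\}$ picks out exactly the bad coordinates of the first $j$ blocks and the $a_n$ are chosen to cancel them; this quantity tends to $0$ (or is uniformly much smaller). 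Taking $j\to\infty$ gives the claimed supremum $=\infty$, and the final sentence of the theorem follows since almost greedy $\Rightarrow$ strong partially greedy was already noted.

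The main obstacle I expect is twofold. First, arranging the arithmetic so that the \emph{greedy} sets of $x$ at the chosen levels $m_j$ really are (a superset of) $A_1\cup\cdots\cup A_j$ and not some mixture — this requires carefully controlling the moduli of the coefficients across blocks, and one must also handle the non-uniqueness of greedy sets (hence the ``$\sup$ over $G_m(x)$'' in the statement, which actually helps, since we may choose the worst greedy set). Second, and more delicate, is ensuring that the coordinates we want to place at the front of $\mathcal{B}_\pi$ to make $\widehat{\widehat{\sigma}}_m$ small are disjoint from (or at least compatible with) the coordinates forming the greedy sets; the cleanest route is to use conditionality in the form: there are vectors $u_j$ with $\|P_{D_j} u_j\|/\|u_j\|\to\infty$ for coordinate subsets $D_j\subset\supp(u_j)$, let $x^{(j)}$ be a rescaling of $u_j$, put $B_j = \supp(u_j)\setminus D_j$ into the greedy sets and $D_j$ into the front of the reordering, and check that the greedy algorithm applied to $x=\sum x^{(j)}$ at level $m_j = |\bigcup_{i\le j}\supp(x^{(i)})| - |\bigcup_{i\le j} D_i|$ (or a similar count) selects the $B_i$'s. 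Verifying all norm estimates is then routine via the Aoki--Rolewicz $p$-convexity inequalities and the quasi-greedy bound, but laying out the block structure so every count works simultaneously is where the real care is needed.
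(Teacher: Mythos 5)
Your high-level skeleton (disjoint blocks with coefficient moduli decreasing across blocks, a permutation that moves certain coordinates of each block to the front, and a comparison of $\|x-G_m(x)\|$ with $\widehat{\widehat{\sigma}}_m[\mathcal{B}_\pi](x)$) matches the paper, but the engine you propose for producing a large ratio does not work, and this is a genuine gap rather than a detail. You take conditionality in the raw form ``there are $u_j$ and $D_j\subset\supp(u_j)$ with $\|P_{D_j}(u_j)\|/\|u_j\|\to\infty$'' and plan to have the greedy sets pick up $B_j=\supp(u_j)\setminus D_j$ while the permutation puts $D_j$ in front, so that cancelling the $D_j$'s bounds $\widehat{\widehat{\sigma}}$. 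Three things break. (a) Quantitatively, if $\|P_{D}(u)\|\ge c\|u\|$ then also $\|u-P_{D}(u)\|\ge (c-1)\|u\|$, so the ``good'' part $P_{B}(u)$ is comparable in norm to the ``bad'' part; the ratio you would extract per block is about $1$, not large. (b) Greedy sets cannot be prescribed: since you insist that all moduli in block $j$ dominate those in block $j+1$, any greedy set of $x$ must exhaust entire earlier blocks before entering later ones, so it can never equal $\bigcup_{i\le j}B_i$ with every $B_i$ a proper subset of its block; moreover nothing in the choice of the witnesses forces $D_j$ to consist of the smallest coefficients of $u_j$, which your plan needs. (c) In your alternative bookkeeping (greedy set $=$ union of full block supports), the numerator $\|x-G_{m_j}(x)\|$ is exactly the tail $\|\sum_{i>j}x^{(i)}\|$ — which must tend to $0$, so it does not ``stay bounded below'' — while your bound for the denominator (cancel the bad coordinates of the first $j$ blocks) still contains that same tail, so no blow-up follows from what you wrote.

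The missing idea in the paper's proof is to use conditionality through the failure of the \emph{greedy} inequality rather than through unbounded projections: since $\mathcal{B}$ is conditional and almost greedy it is not greedy, and every tail $(e_n)_{n>m}$ is again conditional and almost greedy, hence also not greedy. This supplies, inside each tail, a vector $y_k$, a greedy set $A_k\in G(y_k,m_k)$, and a competitor $z_k$ with $|\supp(z_k)|= m_k$ such that $\|y_k-P_{A_k}(y_k)\|>2^k\|y_k-z_k\|$ — precisely the configuration your projection-based witnesses fail to give, because here the large object is the greedy remainder and the small object is $\|y_k-z_k\|$ with $z_k$ of small support (and $z_k$ need not be a projection of $y_k$ at all). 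One then arranges $\supp(y_k)$ to be consecutive intervals, moduli decreasing across blocks, $\|y_k\|\le 2^{-k}$ and $\|y_{k+1}\|$ small against $\min_{j\le k}\|y_j-z_j\|$, sets $x=\sum_k y_k$, notes that the greedy sets of $x$ are $B_{i+1}=\{1,\dots,s_i\}\cup A_{i+1}$, and chooses $\pi$ to permute \emph{within} each block so that $\supp(z_{i+1})$ occupies the first $m_{i+1}$ positions of block $i+1$; then $S_{s_i}(x)+z_{i+1}$ is supported in $\{1,\dots,s_i+m_{i+1}\}$ for $\mathcal{B}_\pi$, giving $\widehat{\widehat{\sigma}}_{s_i+m_{i+1}}[\mathcal{B}_\pi](x)\le\|x-S_{s_i}(x)-z_{i+1}\|\lesssim\|y_{i+1}-z_{i+1}\|$, while $\|x-P_{B_{i+1}}(x)\|\gtrsim 2^{i}\|y_{i+1}-z_{i+1}\|$. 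Your final claim that $\mathcal{B}_\pi$ remains almost greedy (hence strong partially greedy) under any permutation is correct and is the same observation the paper relies on, but the core construction needs to be rebuilt around the failure of the greedy Lebesgue-type inequality in every tail.
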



\section{Consecutive almost greedy bases}

In this section, we show that a basis is consecutive almost greedy if and only if it is almost greedy. For that, we will use an stronger property than democracy.

\begin{defi}[\cite{KT1}]\label{d2}\normalfont A basis in a quasi-Banach space is  super-democractic if there is $\mathbf C > 0$ such that
\begin{equation}\label{e110901} \|1_{\varepsilon A}\|\ \le\ \mathbf C\|1_{\delta B}\|, \end{equation}
for all finite sets $A, B\subset \mathbb{N}$ with $|A| \le |B|$ and all $     \delta, \varepsilon$. Let $\Delta_s$ be the smallest constant for which the above inequality holds. Also, when $\varepsilon\equiv \delta\equiv 1$, we say that $\mathcal B$ is $\Delta$-democratic.
If additionally, we require that $A\sqcup B$, then the basis is said to be disjoint super-democratic. The corresponding disjoint democratic and disjoint super-democratic constants are $\Delta_d$ and $\Delta_{sd}$, respectively.
\end{defi}

The next theorem shows the main characterization of almost greedy bases in quasi-Banach spaces.

\begin{thm}\label{theorem: AGiffQGDem}Let $\mathcal{B}$ be a basis for a quasi-Banach space $\mathbb{X}$. The following statements are equivalent: 
\begin{enumerate}
\item[(i)] $\mathcal{B}$ is almost greedy. 
\item[(ii)] $\mathcal{B}$ is quasi-greedy and democratic. 
\item[(iii)] $\mathcal{B}$ is quasi-greedy and super-democratic. 
\item[(iv)] $\mathcal{B}$ is quasi-greedy and disjoint super-democratic. 
\end{enumerate}
\end{thm}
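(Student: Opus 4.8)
The plan is to prove the chain of implications (i) $\Rightarrow$ (iv) $\Rightarrow$ (iii) $\Rightarrow$ (ii) $\Rightarrow$ (i), where the only nontrivial steps are (i) $\Rightarrow$ (iv) and (ii) $\Rightarrow$ (i), since the implications (iv) $\Rightarrow$ (iii) $\Rightarrow$ (ii) follow immediately by specializing the sets and signs in Definition~\ref{d2}. For (i) $\Rightarrow$ (iv), I would first observe that an almost greedy basis is quasi-greedy: applying \eqref{e7} with $m$ equal to the size of a greedy set yields $\|x-G_m(x)\| \le \mathbf{C}\widetilde{\sigma}_m(x) \le \mathbf{C}\|x\|$, and then $\|G_m(x)\| \le k(\|x\| + \|x-G_m(x)\|)$ (or the $p$-norm analogue) gives the quasi-greedy bound. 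For disjoint super-democracy, take disjoint finite sets $A, B$ with $|A| \le |B|$ and signs $\varepsilon, \delta$; the idea is to build a vector of the form $x = 1_{\varepsilon A} + (1+\eta)1_{\delta B}$ for a small $\eta > 0$, so that the coefficients on $B$ strictly dominate those on $A$, making $B$ (together with enough of $A$ to reach size $|B|$, but here we want a greedy set of size $|B|$ that is exactly $B$ when $|A| < |B|$ we pad—so more carefully, set $x = 1_{\varepsilon A} + (1+\eta)1_{\delta B}$ and take $m = |B|$; then $B \in G(x,m)$). Then $\|1_{\varepsilon A}\| = \|x - G_m(x)\| \le \mathbf{C}\widetilde{\sigma}_m(x) \le \mathbf{C}\|x - P_B(x)\| = \mathbf{C}\|1_{\varepsilon A}\|$—that is circular, so instead I compare against removing $A$: $\widetilde\sigma_m(x) \le \|x - P_A(x)\|$ is not size $m$ unless $|A|=m$. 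The cleaner route is the standard one: use that for the greedy sum $G_m(x) = P_B(x)$ one has $\|1_{\varepsilon A}\| = \|(1+\eta)^{-1}\| \cdot (1+\eta)\|\cdots\|$—let me instead just invoke the known argument: compare $\|1_{\varepsilon A}\|$ with $\|1_{\delta B}\|$ via $x = 1_{\varepsilon A} + (1+\eta)1_{\delta B}$, noting $x - G_{|B|}(x) = 1_{\varepsilon A}$ and $\widetilde\sigma_{|B|}(x) \le \|x - P_{B'}(x)\|$ for any $B'$ with $|B'| = |B|$; choosing $B' \subset A \cup B$ to remove as much mass as possible and letting $\eta \to 0$ yields $\|1_{\varepsilon A}\| \lesssim \|1_{\delta B}\|$ using quasi-greediness to control the leftover terms. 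The limit $\eta\to 0$ is harmless since all quantities are continuous in $\eta$.

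For (ii) $\Rightarrow$ (i), I would reproduce (or cite the structure of) the Dilworth--Kalton--Kutzarova--Temlyakov argument as adapted to $p$-Banach spaces in \cite{AABW}: given $x$, a greedy set $A$ of size $m$, and an arbitrary set $D$ with $|D| = m$, we must bound $\|x - P_A(x)\|$ by a constant times $\|x - P_D(x)\|$. Write $x - P_A(x) = P_{(A\cup D)^c}(x) + P_{D\setminus A}(x)$. The first term: since $A$ is greedy, for $n \in (A\cup D)^c$ we have $|e_n^*(x)| \le \min_{j\in A}|e_j^*(x)|$, and one controls $P_{(A\cup D)^c}(x) = (x - P_D(x)) - P_{A\setminus D}(x) - \big(\text{stuff}\big)$; more precisely $P_{(A \cup D)^c}(x)$ is a ``tail'' of $x - P_D(x)$ obtained by removing the coordinates in $A \setminus D$, which are among the largest, so quasi-greediness (in the suppression form, constant $\mathbf{C}_\ell$, and the Proposition on $p$-Banach truncations) bounds it by a multiple of $\|x - P_D(x)\|$. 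The second term $P_{D\setminus A}(x)$ has $|D\setminus A| = |A\setminus D| \le m$ and every coordinate bounded in modulus by $\min_{j \in A\setminus D}|e_j^*(x)| \le |e_j^*(x)|$ for $j \in A \setminus D$; using part (2) or (3) of the quoted Proposition together with super-democracy (comparing $1_{\varepsilon(D\setminus A)}$ to $1_{\delta(A\setminus D)}$) and quasi-greediness, one bounds $\|P_{D\setminus A}(x)\|$ by a constant times $\|P_{A\setminus D}(x)\| \le \mathbf{C}_\ell\|x - P_D(x)\|$ since $P_{A\setminus D}(x)$ consists of large coordinates of $x - P_D(x)$—wait, those coordinates are \emph{in} $x - P_D(x)$ only if $A \setminus D \subset \supp(x-P_D(x))$, which holds. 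Then combine the two bounds with the $p$-triangle inequality.

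The main obstacle is the bookkeeping in (ii) $\Rightarrow$ (i): carefully tracking which coordinates are ``large'' (those in the greedy set $A$), ensuring that removing $A \setminus D$ from $x - P_D(x)$ really is a legitimate quasi-greedy suppression step (it is, because those coordinates have modulus $\ge$ all coordinates outside $A$, hence $\ge$ all coordinates of $x - P_D(x)$ that survive), and getting the constant in terms of $\mathbf{C}_q$ (or $\mathbf{C}_\ell$), $\Delta_{sd}$, $\mathbf{A}_p$, $\mathbf{B}_p$, and the modulus-of-convexity constant. The continuity/limiting argument in (i) $\Rightarrow$ (iv) and the reduction of general $|A| < |B|$ to the equal-size padded case are routine once set up. I would present the equivalences in the order above, do (iv) $\Rightarrow$ (iii) $\Rightarrow$ (ii) in one line each, then give (i) $\Rightarrow$ (iv) and finally (ii) $\Rightarrow$ (i), possibly citing \cite{AABW} for the last step if a detailed proof would be too long, since the theorem is explicitly stated there to hold for quasi-Banach spaces.
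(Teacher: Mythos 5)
Your chain is (i) $\Rightarrow$ (iv) $\Rightarrow$ (iii) $\Rightarrow$ (ii) $\Rightarrow$ (i), and you dismiss (iv) $\Rightarrow$ (iii) as ``immediate by specializing the sets and signs in Definition~\ref{d2}''. That is backwards: disjoint super-democracy is the \emph{formally weaker} property, since the defining inequality is only required for pairs with $A\sqcup B$, so the implication that follows by specialization is (iii) $\Rightarrow$ (iv), not (iv) $\Rightarrow$ (iii). Consequently your cycle is broken at exactly the one implication the paper proves rather than cites. To pass from the disjoint version to the general one you need an auxiliary set: pick $D$ with $|D|=|A|$ and $D>A\cup B$, so $D$ is disjoint from both $A$ and $B$, and apply disjoint super-democracy twice to get $\|1_{\varepsilon A}\|\le \Delta_{sd}\|1_{D}\|\le \Delta_{sd}^{2}\|1_{\delta B}\|$. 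With that inserted your chain closes; without it, (iv) is never shown to imply any of the other statements.

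Two smaller points. In your (i) $\Rightarrow$ (iv) argument, the step ``choosing $B'\subset A\cup B$ to remove as much mass as possible'' leaves a remainder of the form $(1+\eta)1_{\delta B''}$ with $B''\subset B$, $|B''|=|A|$; to conclude you must note that $B''$ is a greedy set of $1_{\delta B}$ (all coefficients have equal modulus), so quasi-greediness, which you have already established from (i), gives $\|1_{\delta B''}\|\le \mathbf C_q\|1_{\delta B}\|$ --- or argue via the min-over-$k$ refinement as in Lemma~\ref{remarkAG} and project only off $A$. In (ii) $\Rightarrow$ (i) you invoke super-democracy, but (ii) only grants democracy; you would need the standard lemma that quasi-greedy bases satisfy $\|1_{\varepsilon A}\|\lesssim \|1_{A}\|$, or else simply cite \cite{DKKT} and \cite{AABW} for (i) $\Leftrightarrow$ (ii) $\Leftrightarrow$ (iii), which is what the paper itself does: its only new content in this proof is precisely the (iv) $\Rightarrow$ (iii) step you skipped.
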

\begin{rek}
The equivalences are proved in \cite[Theorem 3.3]{DKKT} and \cite[Theorem 6.3]{AABW}, except for the implication disjoint super-democratic $\Longrightarrow$ super-democratic, but this is clear since
$$\|1_{\varepsilon A}\|\ \le\ \Delta_{sd}\|1_{D}\|\ \le\ \Delta_{sd}^2  \|1_{\delta B}\|,$$
for all sets $A, B, D\subset\mathbb{N}$ with $|A| = |D|\le |B|$, $D > A\cup B$, and signs $\varepsilon \delta$. 
\end{rek}

The next lemma was used in \cite{DKK} without a proof and was proved for Banach spaces in \cite[Lemma 2.2]{AA} and for quasi-Banach spaces in \cite[Lemma 6.2]{AABW} under a different formulation. For completeness, we include the proof.  
\begin{lem}\label{remarkAG}
Let $\mathcal{B}$ be a $\mathbf C$-almost greedy basis of a $p$-Banach space $\mathbb X$. Then
$$\|x-G_m(x)\|\ \le\ \mathbf C\min_{0\le k\le m}\widetilde{\sigma}_k(x),\forall x\in\mathbb{X}, \forall m\in\mathbb{N}, \forall G_m(x).$$
\end{lem}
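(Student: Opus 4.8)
The statement to prove is: if $\mathcal{B}$ is a $\mathbf{C}$-almost greedy basis, then $\|x - G_m(x)\| \le \mathbf{C}\min_{0\le k\le m}\widetilde{\sigma}_k(x)$ for all $x$, $m$, and greedy sums $G_m(x)$. The plan is to reduce the case of a general $k \le m$ to the defining inequality \eqref{e7} of almost greediness applied to a carefully chosen companion vector, rather than to $x$ itself.

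First I would fix $x \in \mathbb{X}$, $m \in \mathbb{N}$, a greedy set $A$ with $|A| = m$ and $G_m(x) = P_A(x)$, and an arbitrary $0 \le k \le m$. The key observation is that $A$, being an $m$-greedy set of $x$, contains an $k$-greedy set; more precisely, since $\min_{n \in A}|e_n^*(x)| \ge \max_{n \notin A}|e_n^*(x)|$, we may pick $A_0 \subset A$ with $|A_0| = k$ and $\min_{n\in A_0}|e_n^*(x)| \ge \max_{n \in A\setminus A_0}|e_n^*(x)|$, so that $A_0$ is a $k$-greedy set of $x$ and $A$ is also an $m$-greedy set of $x$ in which $A_0$ is ``first.'' Now consider any set $B$ with $|B| = k$, and set $y := x - P_B(x)$. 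Then $\widetilde{\sigma}_k(x) = \inf_B \|x - P_B(x)\| = \inf_B \|y\|$, so it suffices to show $\|x - P_A(x)\| \le \mathbf{C}\|y\|$ for each such $B$. The idea is to exhibit $A$ (or a suitable greedy set) as arising from an $m$-greedy sum applied to $y$, or to relate $x - P_A(x)$ to $y - G_{m}(y)$ for an appropriate greedy sum of $y$ of size $m$; then \eqref{e7} applied to $y$ with $\widetilde{\sigma}_m(y) \le \|y - P_\emptyset(y)\|$-type bounds, or more directly $\widetilde\sigma_m(y)\le \|y\|$ is not quite enough — one needs that $x - P_A(x) = y - G_m(y)$ for some greedy set of $y$.

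The main technical point — and the step I expect to be the real obstacle — is checking that $x - P_A(x)$ is of the form $y - G_m(y)$ for a genuine greedy set of $y$ of size $m$. This requires care: $y = x - P_B(x)$ agrees with $x$ off $B$, and on $B$ its coefficients vanish. One natural move is to instead perturb: replace the coefficients on $B$ by large values. Concretely, let $\alpha > \|x\|_\infty$ and define $z := x - P_B(x) + \alpha 1_{\varepsilon B}$ for suitable signs $\varepsilon$, so that $B$ is forced into every $m$-greedy set of $z$ (assuming $m \ge k$; the remaining $m - k$ slots are filled, consistently with $A$'s structure, by the top $m-k$ coefficients of $x$ outside $B$, which can be arranged to lie in $A \setminus A_0$ up to reordering of ties). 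Then a greedy sum $G_m(z) = P_{B}(z) + (\text{top } m-k \text{ of } x \text{ off } B)$, and $z - G_m(z) = x - P_{B \cup (\cdots)}(x)$, which on the complement of $B$ looks like $x$ minus a projection. Choosing everything so that $z - G_m(z) = x - P_A(x)$ (possible because $A \supseteq A_0$, $|A|=m$, and ties among coefficients of $x$ give us freedom in selecting greedy sets), and noting $\widetilde{\sigma}_m(z) \le \|z - P_{B\cup B'}(z)\| = \|y - P_{B'}(y)\|$ for appropriate $B'$, we would get $\|x - P_A(x)\| = \|z - G_m(z)\| \le \mathbf{C}\widetilde{\sigma}_m(z) \le \mathbf{C}\|y\| = \mathbf{C}\|x - P_B(x)\|$. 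Taking the infimum over $B$ with $|B| = k$ yields $\|x - P_A(x)\| \le \mathbf{C}\widetilde{\sigma}_k(x)$, and then minimizing over $k$ finishes the proof.

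An alternative, possibly cleaner route avoids the perturbation trick: observe directly that for $|B| = k \le m$, the set $A$ can be chosen so that $A \setminus B$ together with $B$ reconstitutes a valid $m$-greedy-type configuration, and apply \eqref{e7} with the roles arranged so that $\widetilde{\sigma}_m$ of the modified vector is controlled by $\|x - P_B(x)\|$. Whichever route, the crux is the bookkeeping with greedy sets and ties — ensuring that the set we call $A$ is simultaneously a legitimate $m$-greedy set of $x$ and compatible with having a prescribed $k$-subset removed — so I would state at the outset a small combinatorial lemma (or invoke the freedom in choosing greedy sets via $\Lambda_m$) to handle this cleanly, and then the analytic estimate is immediate from Definition~\ref{defAG}.
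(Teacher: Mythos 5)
There is a genuine gap at the step you yourself flag as the crux. You fix an arbitrary $B$ with $|B|=k$ and try to arrange $z-G_m(z)=x-P_A(x)$ for the perturbed vector $z:=x-P_B(x)+\alpha 1_{\varepsilon B}$ with $\alpha>\|x\|_\infty$. But every $m$-greedy set of $z$ must contain $B$, so $z-G_m(z)$ has zero coefficients on all of $B$; explicitly, $z-G_m(z)=x-P_{B\cup D}(x)$ for some $D$ of size $m-k$ disjoint from $B$. On the other hand, $x-P_A(x)$ retains the coefficients $e_n^*(x)$ for $n\in B\setminus A$, which are in general nonzero. Since $\widetilde{\sigma}_k(x)$ is an infimum over \emph{all} $B$ with $|B|=k$, not just $B\subset A$ (the only case in which your matching works, and in that case the lemma is essentially trivial), the identity $\|x-P_A(x)\|=\|z-G_m(z)\|$ fails and the chain $\|x-P_A(x)\|\le \mathbf C\widetilde{\sigma}_m(z)\le \mathbf C\|x-P_B(x)\|$ collapses at its first link. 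There is no way to repair this by "freedom in choosing greedy sets": the discrepancy is $P_{B\setminus A}(x)$ versus $P_{A\setminus(B\cup D)}(x)$, which is not small. (A secondary issue: even granting the identity, your bound $\widetilde{\sigma}_m(z)\le\|z-P_{B\cup B'}(z)\|=\|y-P_{B'}(y)\|\le\|y\|$ is unjustified in a $p$-Banach space, since removing a projection need not decrease the quasi-norm; this part, unlike the main one, could be fixed by sending $B'$ far out.)

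The correct argument is much shorter and does not modify $x$ or touch the greedy set at all. Given an arbitrary set $A_0$ with $|A_0|=k\le m$, choose $B_N>N$ disjoint from $A_0$ with $|B_N|=m-k$. Since $|A_0\cup B_N|=m$, almost greediness applied to $x$ itself gives $\|x-G_m(x)\|\le \mathbf C\|x-P_{A_0}(x)-P_{B_N}(x)\|$, and by $p$-convexity
\begin{align*}
\|x-G_m(x)\|^p\ \le\ \mathbf C^p\|x-P_{A_0}(x)\|^p+\mathbf C^p\|P_{B_N}(x)\|^p .
\end{align*}
Because the coefficients $e_n^*(x)$ tend to $0$, $\|P_{B_N}(x)\|\to 0$ as $N\to\infty$, so $\|x-G_m(x)\|\le \mathbf C\|x-P_{A_0}(x)\|$; taking the infimum over $A_0$ and then the minimum over $k$ finishes the proof. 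Note that the "pad with far-out indices" trick you tried to use on the competitor side of $z$ is exactly what is needed, but applied directly to $x$ and the arbitrary $k$-set, which removes the need for any matching of greedy sets.
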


\begin{proof}
Choose $0\le k\le m$. Let $A\subset\mathbb{N}$ with $|A| = k$. We show that $\|x-G_m(x)\|\le \mathbf C\|x-P_A(x)\|$ for an arbitrary $G_m(x)$. Choose $B_N \subset\mathbb{N}$ such that $B_N > N$, $B_N\sqcup A$, and $|B_N| = m-k$. By $\mathbf C$-almost greediness and $p$-convexity, we have
$$\|x-G_m(x)\|^p\ \le\ \mathbf C^p\|x-P_A(x) - P_{B_N}(x)\|^p\ \le\ \mathbf C^p\|x-P_A(x)\|^p + \mathbf C^p\|P_{B_N}(x)\|^p.$$
Let $\alpha_N = \max_{n > N}|e_n^*(x)|$, which approaches $0$ as $N\rightarrow\infty$. Then 
$$\|P_{B_N}(x)\|\ \le\ m^{\frac{1}{p}}\alpha_Nc_2 \ \rightarrow\ 0\mbox{ as } N\ \rightarrow\ \infty.$$
Therefore, $\|x-G_m(x)\|\le \mathbf C\|x-P_A(x)\|$, as desired. 
\end{proof}

\begin{proof}[Proof of Theorem \ref{t1}]
That (i) $\Longrightarrow$ (ii) follows at once by Lemma \ref{remarkAG}, whereas (ii) $\Longrightarrow$ (iii) is immediate. To prove  (iii) $\Longrightarrow$ (i), we use one of the equivalences of Theorem \ref{theorem: AGiffQGDem}.

First, we prove that $\mathcal{B}$ is quasi-greedy: pick $x\in \mathbb{X}$ and $m\in \mathbb{N}$, and let $I_n:=\{n+1,\dots,n+m\}$ for all $n\in \mathbb{N}$. Since $\mathcal{B}^*$ is weak$^*$ null and $\mathcal{B}$ is bounded, we have 
$$\|x-G_m(x)\|\ \le\ \mathbf C_{ca}\|x-P_{I_n}(x)\|\ \xrightarrow[n\to \infty]{}\ \mathbf C_{ca}\|x\|.$$
Hence, $\mathcal{B}$ is $\mathbf C_{\ell}$-suppression quasi-greedy with $\mathbf C_{\ell}\leq \mathbf C_{ca}$.

Next, we prove that $\mathcal{B}$ is democratic: choose finite sets $A, B\subset \mathbb{N}$ with $|A|\le |B|$. Let $I_1, I_2\in \mathcal{I}$ such that $A\subset I_1$ and $|I_2| = |B|$. We have
$$\|1_A\|\ =\ \|(1_{I_1} + 1_{I_2}) -  (1_{I_1\backslash A}+1_{I_2})\|\ \le\ \mathbf C_{ca}\|(1_{I_1} + 1_{I_2}) - 1_{I_1}\|\ =\ \mathbf C_{ca}\|1_{I_2}\|.$$
On the other hand, 
$$\|1_{I_2}\|\ =\ \|(1_{I_2} + 1_B) - 1_B\|\ \le\ \mathbf C_{ca}\|(1_{I_2} + 1_B) - 1_{I_2}\|\ =\ \mathbf C_{ca}\|1_B\|.$$
We have that $\|1_A\|\le \mathbf C_{ca}^2\|1_B\|$ and so, $\mathcal{B}$ is democratic.

Finally, if $\mathbb{X}$ is a Banach space, we show that a $1$-CAG basis is $1$-almost greedy. Fix $x\in \mathbb{X}$ with $\|x\|_{\infty}\le 1$, and $k,j\notin \supp(x)$, $k\neq j$, and signs $\varepsilon_k, \varepsilon_j$. For all $\epsilon>0$, we have 
$$
\|x+\varepsilon_k e_k \|\ =\ \|x+\varepsilon_k e_k+(1+\epsilon)\varepsilon_j e_j - (1+\epsilon)\varepsilon_j e_j\|\ \le\ \|x+(1+\epsilon)\varepsilon_j e_j\|. 
$$
Since $\epsilon$ is arbitrary, it follows that
\begin{equation}\|x+\varepsilon_k e_k \|\ \le\ \|x+\varepsilon_j e_j \|.\label{1almost}\end{equation}
An inductive argument using \eqref{1almost} combined with \cite[Theorem 1.5]{AA} gives that $\mathcal{B}$ is $1$-almost greedy. 
\end{proof}

\begin{exa}\normalfont\label{E1}
	We use the canonical basis of $\ell_1$ to show that $$\sup_{x\in\mathbb{X},m\in\mathbb{N}} \frac{\widecheck{\sigma}_m(x)}{\widetilde{\sigma}_m(x)} \ =\ \infty.$$
	which makes Theorem \ref{t1} quite surprising. We show that for any $C>1$, there are $x\in\mathbb{X}$ and $m\in\mathbb{N}$ satisfying $C \widetilde{\sigma}_m(x) < \widecheck{\sigma}_m(x)$. Indeed, fix $C > 1$ and choose $m\in\mathbb{N}$ and a real scalar $a$ such that $a(m-1) > C$. Consider the following vector:
$$x\ :=\ \left(a, \underbrace{\frac{1}{m^2},\ldots, \frac{1}{m^2}}_{m\mbox{ times}}, \ldots, a, \underbrace{\frac{1}{m^2},\ldots, \frac{1}{m^2}}_{m\mbox{ times}}, 0, 0, \ldots\right).$$
There are $m$ blocks of $a, \underbrace{\frac{1}{m^2},\ldots, \frac{1}{m^2}}_{m\mbox{ times}}$. Removing all the coefficients of magnitude $a$, we have
$$\widetilde{\sigma}_m(x)\ \le\ m^2\cdot \frac{1}{m^2}\ =\ 1.$$
In contrast, by the definition of $\widecheck{\sigma}_m(x)$, we cannot remove more than one coefficient of magnitude $a$; hence, 
$$\widecheck{\sigma}_m(x) \ \ge\ (m-1)a\ > \ C.$$
Therefore, $\widecheck{\sigma}_m(x) > C\widetilde{\sigma}_m(x)$.
\end{exa}


\section{Characterizations of almost greedy bases using intervals and $1$-dimensional subspaces}

In 2017, Bern\'{a} and Blasco \cite{BB} characterized greedy bases, that is, bases where the TGA produces the best possible approximation (see \cite{KT1}), using $1$-dimensional subspaces.  Later, Dilworth and Khurana \cite{DK} obtained an analog for almost greedy bases. Recently, the last two authors of the present paper \cite{BC} allowed the coefficients of a nonzero vector in these $1$-dimensional subspaces to be different and examine whether these characterizations still hold. Below we prove Proposition \ref{1dim} which characterizes almost greedy bases using both intervals and $1$-dimensional subspaces. Furthermore, we show that it is not possible to strengthen the result in a natural way. 

\begin{proof}[Proof of Proposition \ref{1dim}]
(i) $\Longrightarrow$ (ii): Let $\mathcal{B}$ be a $\mathbf C_a$-almost greedy basis. By Theorem \ref{theorem: AGiffQGDem}, $\mathcal{B}$ is $\mathbf C_q$-quasi-greedy, $\mathbf C_{\ell}$-suppression quasi-greedy, and $\Delta_{sd}$-disjoint super-democratic for some $\mathbf C_q, \mathbf C_\ell, \Delta_{sd} > 0$. Assume $B\neq\emptyset$ and $P_A(x)\neq x$. Let $m_1 := |B|$ and $a:=\min_{n\in A}|e_n(x)|$. We consider two cases: if $|t|\le 2a$, pick $A_1\in G(x-t 1_{\varepsilon B}, m_1)$. Since $m_1\le m = |A|$ and 
$$A\ \subset\ \{n\in \mathbb{N}: |e_n^*(x-t 1_{\varepsilon B})|\ge a\},$$
it follows that 
$$b\ :=\ \min_{n\in A_1}|e_n^*(x-t 1_{\varepsilon B})|\ \ge\ a\ \ge\ \frac{|t|}{2}.$$
Now pick $D>A_1\cup B$ with $|D|=m_1$. Note that
\begin{align*}
b\|1_{D}\|&\ =\ \|b1_{D}+P_{A_1}(x-t 1_{\varepsilon B})-P_{A_1}(x-t 1_{\varepsilon B})\|\\
 &\ \le\ \mathbf C_{a}\|P_{A_1}(x-t 1_{\varepsilon B})\|\ \le\  \mathbf C_{a}\mathbf C_{q}\|x-t 1_{\varepsilon B}\|. 
\end{align*}
Hence, 
$$\|t 1_{\varepsilon B}\|\ \le\ 2b \Delta_{sd}\|1_{D}\|\ \le\ 2\Delta_{sd}\mathbf C_{a}\mathbf C_{q}\|x-t 1_{\varepsilon B}\|.$$
On the other hand, if $|t|> 2a$, 
$$
|e_n^*(x-t 1_{\varepsilon B})|\ \ge\ |t|-|e_n^*(x)|\ \ge\ |t| - a\ >\ \frac{|t|}{2}, \forall n\in B. 
$$
Thus, there is $A_2\in G(x-t 1_{\varepsilon B},m_1)$ with 
$$
\min_{n\in A_2}|e_n(x-t 1_{\varepsilon B})|\ \ge\ \frac{|t|}{2}. 
$$
Hence, the same argument as above gives
$$\|t 1_{\varepsilon, B}\|\ \le\  2\Delta_{sd}\mathbf C_{a}\mathbf C_{q}\|x-t 1_{\varepsilon B}\|.$$
Therefore, 
\begin{align*}
\|x-P_A(x)\|^p
&\ \le\ \mathbf C_{\ell}^p\|x\|^p\ \le\ \mathbf C_{\ell}^p\|x-t 1_{\varepsilon B}\|^p+\mathbf C_{\ell}^p\|t 1_{\varepsilon B}\|^p\\
&\ \le\  \mathbf C_{\ell}^p\|x-t 1_{\varepsilon B}\|^p+ ( 2\mathbf C_{\ell}\mathbf C_q \mathbf C_a\Delta_{sd})^p\|x-t 1_{\varepsilon B}\|^p, \end{align*}
from which we obtain (ii) with $\mathbf C=\mathbf C_{\ell}(1+2^p\mathbf C_q^p\mathbf C_a^p\mathbf \Delta_{sd}^p)^{\frac{1}{p}}$.

(ii) $\Longrightarrow$ (iii) is immediate.

(iii) $\Longrightarrow$ (i): By  Theorem~\ref{theorem: AGiffQGDem}, it is enough to prove that $\mathcal{B}$ is quasi-greedy and democratic. To prove the former, fix $x\in \mathbb{X}\backslash\{0\}$, $m\in \mathbb{N}$ and $A\in G(x,m)$. By a standard small perturbation argument and density, given $\epsilon>0$, we can find $y\in \mathbb{X}$ with $\|x-y\|<\epsilon \|x\|$ such that $G(y,m)=\{A\}$. Hence, taking $I$ to be the empty interval, we get 
\begin{align*}
\|x-P_A(x)\|^p&\ \le\ \|x-y\|^p+\|P_A(x)-P_A(y)\|^p+\|y-P_A(y)\|^p\\
&\ \le\ \|x-y\|^p+\|P_A(x)-P_A(y)\|^p+\mathbf C^p\|y-x\|^p+\mathbf C^p\|x\|^p\\
&\ \le\ \epsilon^p(1+\|P_A\|^p+\mathbf C^p)+\mathbf C^p\|x\|^p. 
\end{align*}
Since $\epsilon$ is arbitrary and $A$ is finite, this shows that $\mathcal{B}$ is $\mathbf C$-suppression-quasi-greedy.

To prove that $\mathcal{B}$ is democratic, choose finite sets $A, B\subset \mathbb{N}$ with $0<|A|\le |B|$. Pick $I_1\in \mathcal{I}$ so that $I_1>A\cup B$ and $|I_1|=|B|$, and let $I_2\in \mathcal{I}$ be the smallest interval containing $A$. For every $\epsilon>0$, projecting on $I_2$ gives
$$\|1_{A}\|\ = \ \|1_{A}+(1+\epsilon)1_{I_1}-(1+\epsilon)1_{I_1} \|\ \le\  \mathbf C(1+\epsilon)\|1_{I_1}\|.$$
Similarly, projecting on $I_1$ gives
$$\|1_{I_1}\|\ =\ \|1_{I_1}+(1+\epsilon) 1_{B}-(1+\epsilon)1_{B} \|\ \le\ \mathbf C(1+\epsilon)\|1_{B}\|.$$
Thus, $\mathcal{B}$ is $\mathbf C^2$-democratic. 
\end{proof}

We give an example showing that we cannot replace the condition $|I\cap \supp(x)|\le m$ by $|I|\le m$ in item (iii) of Proposition \ref{1dim}. In fact, even if
\begin{equation}\|x-P_A(x)\|\ \le\ \mathbf C \|x-y\|\label{intervals+1-dim}\end{equation}
for all $x\in \mathbb{X}$, $m\in \mathbb{N}$, $A\in G(x,m)$, and all $y\in \mathbb{X}$ with $\supp(y)\subset I$ for some $I\in \mathcal{I}^{(m)}$ such that $I\sqcup A$, the basis may not even be partially democratic.

\begin{defi}\normalfont
A basis $\mathcal B$ of a quasi-Banach space is $\mathbf C$-partially democratic for some $\mathbf C > 0$ if for every finite set $A\subset\mathbb{N}$, there exists a finite set $D\subset \mathbb{N}$ such that $A\subset D$ and for every $B\subset \mathbb{N}\backslash D$ with $|A| = |B|$, we have
$$\|1_A\|\ \le\ \mathbf C\|1_B\|.$$
\end{defi}

\begin{exa}\label{example: not1dimintervals}\normalfont
Choose $1\le p<q<\infty$. The space $\mathbb{X}=\ell_p\times \ell_q$ with the norm $\|(z,y)\|=\max(\|z\|_{p},\|y\|_{q})$ has an $1$-unconditional  basis for which \eqref{intervals+1-dim} holds but is not partially democratic.
\end{exa}
\begin{proof}
Let $\mathcal{B}_1=(z_n)_{n\in \mathbb{N}}$ and $\mathcal{B}_2=(y_n)_{n\in \mathbb{N}}$ be the canonical unit vector bases of $\ell_p$ and $\ell_q$, respectively. Choose a sequence $(s_k)_{k\in \mathbb{N}}\subset\mathbb{N}$ so that for each $m\in \mathbb{N}$, 
$$s_{m} \ >\ (m+1)^{q/p} \mbox{ and } s_{m+1}\ \ge\ 1+2s_m,$$
and let $\mathcal{B}=(e_n)_{n\in \mathbb{N}}$ be an ordering of the basis $\{(z_i,0),(0,y_j)\}_{i,j\in \mathbb{N}}$ with the following property: for each $n\in \mathbb{N}$, there is $i\in \mathbb{N}$ such that $e_n=(z_i,0)$ if and only if $n=s_k$ for some $k\in \mathbb{N}$. 

Since both $\mathcal{B}_1$ and $\mathcal{B}_2$ are $1$-unconditional, $\mathcal{B}$ is $1$-unconditional. Moreover, $\mathcal{B}$ is not partially democratic because it has one subsequence equivalent to $\mathcal{B}_1$ and one equivalent to $\mathcal{B}_2$. We show that \eqref{intervals+1-dim} holds for $\mathcal{B}$. Pick $x, m, A, I, y$ as in \eqref{intervals+1-dim}.
First note that by $1$-unconditionality, we have
$$\|x-y\|\ \ge\ \|x-P_I(x)\|.$$
Hence, we only need to find $\mathbf C>0$ such that 
\begin{equation}\|x-P_A(x)\|\ \le\ \mathbf C\|x-P_I(x)\|, \label{example:not1intervals: toprove}\end{equation}
and which is independent of the vectors and sets involved. To that end, assume $P_A(x)\neq x$, write $x=(z,y)$, and set
\begin{align*}
A_1&\ :=\ A\cap \{s_k\}_{k\in\mathbb{N}} \mbox{ and }  A_2\ :=\ A\backslash A_1;\\
B_1&\ :=\ I\cap \{s_k\}_{k\in\mathbb{N}}  \mbox{ and }  B_2\ :=\ I\backslash B_1;\\
a&\ :=\ \min_{n\in A}|e_n^*(x)|.
\end{align*}
Now define finite sets $A_1', A_2', B_1', B_2'$ so that 
\begin{align*}
(P_{A_1'}(z),0)&\ =\ P_{A_1}(x);\\
(P_{B_1'}(z),0)&\ =\ P_{B_1}(x);\\
(0,P_{A_2'}(y))&\ = \ P_{A_2}(y);\\
(0,P_{B_2'}(y))&\ =\ P_{B_2}(y). 
\end{align*}
We proceed by case analysis.

Case 1: If $\|P_I(x)\|\le \|x\|/2$, then
$$\|x-P_I(x)\|\ \ge\ \|x\|/2 \ \ge\ \|x-P_A(x)\|/2.$$

Case 2: If $\|P_{B_2'}(y)\|_{q} > \|x\|/2$ and $|B_2'|\le 2|A_2'|$, then 
\begin{align*}
\|x-P_I(x)\|&\ \ge\ \|y-P_{B_2'}(y)\|_{q}\ \ge\ \|P_{A'_2}(y)\|_{q}\ \ge\ a |A_2'|^{\frac{1}{q}}\ \ge\ 2^{-\frac{1}{q}}a|B_2'|^{\frac{1}{q}}\\
&\ \ge\ 2^{-\frac{1}{q}}\|P_{B_2'}(y)\|_{q}\ >\ 2^{-\frac{1}{q}-1}\|x\|\ \ge\ 2^{-\frac{1}{q}-1}\|x-P_A(x)\|.
\end{align*}

Case 3: If $\|P_{B_2'}(y)\|_{q}>\|x\|/2$ and $|B_2'|\le 2|A_1'|$, then
\begin{align*}
\|x-P_I(x)\|&\ \ge\ \|z-P_{B_1'}(z)\|_{p}\ \ge\ \|P_{A_1'}(z)\|_{p}\ \ge\ a |A_1'|^{\frac{1}{p}}\ \ge\ 2^{-\frac{1}{p}}a|B_2'|^{\frac{1}{p}}\\
&\ \ge\ 2^{-\frac{1}{p}}a|B_2'|^{\frac{1}{q}}\ \ge\  2^{-\frac{1}{p}}\|P_{B_2'}(y)\|_{q}\ >\ 2^{-\frac{1}{p}-1}\|x\|\ \ge\ 2^{-\frac{1}{p}-1}\|x-P_A(x)\|.
\end{align*}

Case 4: If $\|P_{B_1'}(z)\|_{p}>\|x\|/2$ and $|B_1'|\le |A_1'|$, then 
\begin{align*}
\|x-P_I(x)\|&\ \ge\ \|z-P_{B_1'}(z)\|_{p}\ \ge\ \|P_{A_1'}(z)\|_{p}\ \ge\ a |A_1'|^{\frac{1}{p}}\ \ge\ a|B_1'|^{\frac{1}{p}}\\
&\ \ge\ \|P_{B_1'}(z)\|_{p}\ >\ \|x\|/2\ \ge\ \|x-P_A(x)\|/2.
\end{align*}

Case 5: If $\|P_{B_1'}(z)\|_{p}>\|x\|/2$, $|B_1'|>|A_1'|$, and $|B_1'|\ge 2$, let $k_0:= \max\{k\in \mathbb{N}: s_k\in B_1\}$. We have 
$$|A_2'|\ \ge \ |B_2'|\ =\ |I\setminus B_1|\ \ge\ s_{k_0}-s_{k_0-1}-1\ \ge\ s_{k_0-1}\ge \ s_{|B_1|-1} \ \ge\  |B_1|^{\frac{q}{p}}.$$
Hence, 
\begin{align*}
\|x-P_I(x)\|&\ \ge\ \|y-P_{B_2'}(y)\|_{q}\ \ge\ \|P_{A_2'}(y)\|_{q}\ \ge\ a |A_2'|^{\frac{1}{q}}\ \ge\ a |B_1|^{\frac{1}{p}}\ \ge\ \|P_{B_1'}(z)\|_{p}\\
&\ \ge\ \|x\|/2\ \ge\ \|x-P_A(x)\|/2.
\end{align*}

Case 6: If $\|P_{B_1'}(z)\|_{p}>\|x\|/2$ and $|B_1'|=1$,
then 
\begin{align*}
\|x-P_A(x)\|\ \le\ \|x\|\ \le\ 2\|P_{B_1'}(z)\|_{p}\ \le\ 2a\ \le\  2 \|x-P_I(x)\|,
\end{align*}
where the last inequality follows from the fact that for each $n\in A$, $|e_n^*(x-P_I(x))|=|e_n^*(x)|\ge a$. \\
Given that
$$\|P_I(x)\|\ =\ \max\{\|P_{B_2'}(y)\|_{q}, \|P_{B_1'}(z)\|_{p}\},$$
we have covered all cases, so the proof is complete. 
\end{proof}


\section{A strong partially greedy basis that is not super-strong partially greedy}

\begin{proof}[Proof of Theorem \ref{t3}]
Since $\mathcal{B} = (e_n)_{n\ge 1}$ is not greedy, one can find $y_1\in \mathbb{X}$, $m_1\in \mathbb{N}$, $A_1\in G(y_1,m_1)$, and $z_1\in \mathbb{X}$ with $|\supp(z_1)|\le m_1$ such that 
$$\|y_1-P_{A_1}(y_1)\|\ >\ 2\|y_1-z_1\|.$$
By scaling, we may assume that $\|y_1\|\le 2^{-1}$. Moreover, by standard small perturbations and density arguments, we may assume further that there is $l_1>m_1$ such that $\supp(y_1)= \{1,\dots,l_1\}$ and that $|\supp(z_1)|=m_1$.\\
Note that for each $m\in \mathbb{N}$, $\mathcal{B}_{m}:=(e_n)_{n> m}$ is a conditional almost greedy basis of $\mathbb{X}_{m}:=\overline{[\mathcal{B}_{m}]}$.  In particular, taking $m=l_1$, we can find $y_2\in \mathbb{X}_{l_1}$ with $\|y_2\|\le 2^{-2}$, $m_2\in \mathbb{N}$, $A_2\in G(y_2,m_2)$, and $z_2\in \mathbb{X}_{l_1}$ with $|\supp(z_2)|= m_2$ such that 
$$\|y_2-P_{A_2}(y_2)\|\ >\ 2^2 \|y_2-z_2\|.$$
As before, we may also assume that there is $l_2>m_2$ such that 
$\supp(y_2)= \{l_1+1,\dots,l_1+l_2\}$.
Moreover, by scaling, we can choose $y_2$ so that 
\begin{align*}
\max_{n\in \supp(y_2)}|e_n^*(y_2)|\ <\ \min_{n\in \supp(y_1)}|e_n^*(y_1)|\mbox{ and } \|y_2\|\ < \ 2^{-2}\|y_1-z_1\|. 
\end{align*}
In this manner, inductively we construct sequences $(y_k)_{k\in \mathbb{N}}$, $(z_k)_{k\in \mathbb{N}}$, $(m_k)_{k\in \mathbb{N}}$, $(l_k)_{k\in \mathbb{N}}$ and $(A_k)_{k\in \mathbb{N}}$ such that, for all $k\in \mathbb{N}$, 
\begin{align}
&1\ \le\ m_k\ <\ l_k;\nonumber\\
&A_k\ \in\ G(y_k,m_k);\nonumber\\
&\|y_k\|\ \le\ 2^{-k}\mbox{ and } \|y_{k+1}\|\ <\  2^{-(k+1)}\min_{1\le j\le k}\|y_j-z_j\|;\label{lemma: notschauder, smallenough}\\
&\|y_k-P_{A_k}(y_k)\|\ >\ 2^{k} \|y_k-z_k\|;\nonumber\\
& \max_{n\in \supp(y_{k+1})}|e_n^*(y_{k+1})|\ <\ \min_{n\in \supp(y_k)}|e_n^*(y_{k})|;\label{lemma: notschauder: allgreedy}\\
& \supp(y_k)\ =\ \left\{\sum_{1\le j\le k-1}l_{j}+1,\dots,\sum_{1\le j\le k}l_{j}\right\}.\label{lemma: notschauder: allgreedy2}
\end{align}
 Define 
$$x\ :=\ \sum_{k\in \mathbb{N}}y_k,$$
and, for each $i\in \mathbb{N}$, set 
$$u_i\ :=\ \sum_{1\le k\le i}y_k,\mbox{ } s_i\ :=\ \sum_{1\le k\le i}l_k,\mbox{ } B_i\ :=\ A_i\cup \{k\in \mathbb{N}: 1\le k\le s_{i-1}\}, $$
where $s_0=0$. Note that for each $i\in \mathbb{N}$, 
\begin{align*}
&\supp(u_i)\ =\ \{1,\dots, s_i\};\\
&B_{i+1}\ =\ \supp(u_{i})\cup A_{i+1}\ \subset\ \supp(u_{i+1}).
\end{align*}
Moreover, it follows from \eqref{lemma: notschauder: allgreedy} and \eqref{lemma: notschauder: allgreedy2} that $B_{i+1}\in G(u_{i+1}, s_i+m_{i+1})= G(x, s_i+m_{i+1}) $ for each $i\in \mathbb{N}$. Note that
\begin{align}
\|u_{i+1}-P_{B_{i+1}}(u_{i+1})\|\ = \ \|y_{i+1}-P_{A_{i+1}}(y_{i+1})\|\ >\ 2^{i+1}\|y_{i+1}-z_{i+1}\|. \label{lemma: notschauder: forreordering}
\end{align}
Now we define the bijection $\pi$: for each $j\in \mathbb{N}$, let $\pi_j$ be a bijection on $\{s_{j-1}+1, \dots, s_j\}$ such that
\begin{align*}
\{\pi_j(k): s_{j-1}+1\le k \le s_{j-1}+m_j\}\ =\ \supp(z_j). 
\end{align*}
and let $\pi(k):=\pi_j(k)$ if $s_{j-1}+1\le k\le s_j$. It follows from this choice that, for each $i\in \mathbb{N}$, 
\begin{align}
\supp_{[\mathcal{B}_{\pi}]}(S_{s_i}(x)+z_{i+1})&\ =\ \supp_{[\mathcal{B}_{\pi}]}(S_{s_i}(u_{i+1})+z_{i+1})\nonumber\\
&\ =\ \{1,\dots, s_i+m_{i+1}\}. \label{lemma: notschauder: forreordering2}
\end{align}
Since greedy sets are preserved by reorderings, we have that $B_{i+1}\in G(u_{i+1}, s_i+m_{i+1})= G(x, s_i+m_{i+1})$ with respect to $\mathcal{B}_{\pi}$ as well. \\
Note also that 
$$S_{s_i}[\mathcal{B}_{\pi}](y)\ =\ S_{s_i}[\mathcal{B}](y), \forall y\in \mathbb{X},\forall i\in \mathbb{N}. $$
Thus, applying \eqref{lemma: notschauder, smallenough} and \eqref{lemma: notschauder: forreordering} to $\mathcal{B}_{\pi}$, we obtain 
\begin{align*}
\|x-P_{B_{i+1}}(x)\|& \ =\ \|x-P_{B_{i+1}}(u_{i+1})\|\ \ge\ \|u_{i+1}-P_{B_{i+1}}(u_{i+1})\|-\|x-u_{i+1}\|\\
&\ \ge\ 2^{i+1}\|y_{i+1}-z_{i+1}\|-\sum_{n\ge i+2}2^{-n}\|y_{i+1}-z_{i+1}\|\ \ge\ 2^{i}\|y_{i+1}-z_{i+1}\|.
\end{align*}
On the other hand, 
\begin{align*}
\|x-S_{s_i}(x)-z_{i+1}\|&\ =\ \|x-S_{s_i}(u_{i+1})-z_{i+1}\|\\
&\ \le\ \|u_{i+1}-S_{s_i}(u_{i+1})-z_{i+1}\|+\|x-u_{i+1}\|\\
&\ \le\ \|y_{i+1}-z_{i+1}\|+\sum_{n\ge i+2}2^{-n}\|y_{i+1}-z_{i+1}\|\\
&\ <\ 2 \|y_{i+1}-z_{i+1}\|. 
\end{align*}
Combining the above inequalities for $i\ge 2$, we obtain 
$$\|x-P_{B_{i+1}}(x)\|\ >\ 2^{i-1}\|x-S_{s_i}(x)-z_{i+1}\|\ \ge\ 2^{i-1}\widehat{\widehat{\sigma}}_{s_i+m_i}[\mathcal{B}_{\pi}](x),$$
and the proof is complete. 
\end{proof}


\ \\
\end{document}